\newtheorem{theorem}{Theorem}[section]
\newtheorem{lemma}{Lemma}[section]
\newtheorem{proposition}{Proposition}[section]
\theoremstyle{definition}
\newtheorem{definition}{Definition}[section]
\newtheorem{remark}{Remark}[section]
\numberwithin{equation}{section}
\newcommand{\Z}{\mathbb{Z}}
\newcommand{\Q}{\mathbb{Q}}
\newcommand{\R}{\mathbb{R}}
\newcommand{\F}{\mathbb{F}}
\newcommand{\uh}{\mathbb{H}}
\newcommand{\C}{\mathbb{C}}
\newcommand{\ov}{\overline}
\newcommand{\de}{\Delta}
\newcommand{\s}{\sigma}
\newcommand{\p}{\mathfrak{p}}
\newcommand{\qu}{\mathfrak{q}}
\newcommand{\eps}{\varepsilon}
\newcommand{\mco}{\mathcal{O}}
\newcommand{\ok}{\mathcal{O}_K}
\DeclareMathOperator{\im}{Im}
\DeclareMathOperator{\re}{Re}
\DeclareMathOperator{\order}{ord}
\DeclareMathOperator{\maks}{max}
\DeclareMathOperator{\minn}{min}
\DeclareMathOperator{\modulo}{mod}
\begin{document}

\title[A Gap in the Spectrum of the Faltings Height]{A Gap in the Spectrum of the Faltings Height}


\author{Steffen L\"obrich}
\address{Steffen L\"obrich\\
Mathematical Institute, University of Cologne, Weyertal 86-90, 50931 Cologne \\
Germany}
\email{steffen.loebrich@uni-koeln.de}
\urladdr{http://www.mi.uni-koeln.de/$\mathtt{\sim}$sloebric/}

\subjclass[2010]{11G50, 14G40}

\keywords{Elliptic curves, Faltings height, Weil height}

\maketitle

\begin{abstract}
We show that the minimum $h_{\text{min}}$ of the stable Faltings height on elliptic curves found by Deligne is followed by a gap. This means that there is a constant $C >0$ such that for every elliptic curve $E/K$ with everywhere semistable reduction over a number field $K$, we either have $h(E/K)=h_{\text{min}}$ or $h(E/K)\geq h_{\text{min}} +C$. We determine such an absolute constant explicitly. On the contrary, we show that there is no such gap for elliptic curves with unstable reduction.
\end{abstract}

\bigskip
\section{Introduction and Statement of Results}

The Faltings height was introduced by Faltings in his famous proof of the Mordell conjecture \cite{fal} and gives a notion of arithmetic complexity for abelian varieties over number fields. In this article we will only consider elliptic curves, the abelian varieties of dimension $1$. The spectrum of values of the Faltings height $h(E/K)$ for $E$ an elliptic curve over a number field $K$ (see Definition \ref{fhdef}) was first examined by Deligne \cite{del}, who showed that it attains a minimum precisely at elliptic curves with $j$-invariant $0$ and everywhere good reduction. Deligne also gave an explicit expression for the minimal value using the Chowla-Selberg formula. Throughout this article we follow Deligne's normalization.

\begin{theorem}[Deligne]
The Faltings height for elliptic curves is minimal precisely at elliptic curves with $j$-invariant $0$ having everywhere good reduction. The minimum is given by 
\begin{equation*}
h_{\text{min}} = -\frac{1}{2}\log\left(\frac{1}{\sqrt{3}}\left(\frac{\Gamma (1/3)}{\Gamma (2/3)} \right)^3\right)=-0.74875248\dots.
\end{equation*}
\end{theorem}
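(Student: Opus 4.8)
The plan is to reduce the theorem to an extremal problem for a single real-analytic function on the modular curve, to locate the extremum at one of the two elliptic points, and to identify the winner via the Chowla--Selberg formula. Since the stable Faltings height is invariant under base change, I would first replace $K$ by a finite extension over which $E$ acquires everywhere semistable reduction, and then invoke the standard decomposition
\begin{equation*}
h(E/K)=\frac{1}{12\,[K:\Q]}\log N(\mathfrak{d}_{E/K})+\frac{1}{[K:\Q]}\sum_{v\mid\infty}[K_v:\R]\,\lambda(\tau_v),
\end{equation*}
where $\mathfrak{d}_{E/K}$ is the minimal discriminant ideal, $\tau_v\in\uh$ is a period of $E_v(\C)$, and $\lambda(\tau)=-\tfrac12\log\big(2\pi\,\im(\tau)\,|\eta(\tau)|^{4}\big)$ is $\mathrm{SL}_2(\Z)$-invariant, hence descends to $\uh/\mathrm{SL}_2(\Z)$. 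The finite part is nonnegative and vanishes precisely when $\mathfrak{d}_{E/K}=(1)$, i.e. when $E$ has good reduction at every finite place; since $\sum_{v\mid\infty}[K_v:\R]=[K:\Q]$, this already gives $h(E/K)\ge\minn_\tau\lambda(\tau)$, with equality if and only if $E$ has everywhere good reduction and every $\tau_v$ minimizes $\lambda$. The problem is thus to show that $\lambda$ attains its minimum on $\uh/\mathrm{SL}_2(\Z)$ at a unique point, $\tau=\rho:=e^{2\pi i/3}$, with value $h_{\text{min}}$.

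To find the extrema, set $g(\tau):=\im(\tau)^{6}|\Delta(\tau)|$, so that $\lambda$ agrees with $-\tfrac1{12}\log g$ up to an additive constant. Using $\partial_\tau\log\Delta=2\pi i\,E_2$ one computes $\partial_\tau\log g=\pi i\big(E_2(\tau)-\tfrac{3}{\pi\,\im\tau}\big)=\pi i\,E_2^{\ast}(\tau)$, so the critical points of $\lambda$ on $\uh$ are exactly the zeros of the non-holomorphic weight-two Eisenstein series $E_2^{\ast}$. From $E_2^{\ast}(\gamma\tau)=(c\tau+d)^{2}E_2^{\ast}(\tau)$ one sees immediately that $E_2^{\ast}$ vanishes at $\tau=i$ and $\tau=\rho$, since these are fixed by transformations with $(c\tau+d)^{2}$ a nontrivial root of unity. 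I would also record that $\log g$ is strictly superharmonic — its Laplacian equals that of $6\log\im\tau$, which is $-6/\im(\tau)^{2}<0$ — so it has no interior local minimum, and since $g(\tau)\to0$ at the cusp it attains its maximum on the compactified fundamental domain, necessarily at one of the elliptic points. The key analytic input, and the step I expect to be the main obstacle, is that $E_2^{\ast}$ has \emph{no other} zeros in $\uh$: this should follow from controlling $E_2$ by its $q$-expansion together with the bound $\im(\tau)\ge\sqrt{3}/2$ on the standard fundamental domain, which confines any further zeros to a neighborhood of the bottom arc near $i$ and $\rho$, but turning this into a rigorous argument is delicate.

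Finally I would evaluate $g$ at the two candidates. The curves of $j$-invariant $1728$ and $0$ have complex multiplication by the orders of discriminant $-4$ and $-3$, so the Chowla--Selberg formula applies and yields, after a short manipulation,
\begin{equation*}
2\pi\,\im(i)\,|\eta(i)|^{4}=\frac{\Gamma(1/4)^{4}}{8\pi^{2}},\qquad
2\pi\,\im(\rho)\,|\eta(\rho)|^{4}=\frac{1}{\sqrt{3}}\left(\frac{\Gamma(1/3)}{\Gamma(2/3)}\right)^{3}.
\end{equation*}
Comparing the two numbers — numerically, or through an elementary inequality between these $\Gamma$-values — shows the second is the larger, so $g$ is maximal (equivalently $\lambda$ minimal) at $\tau=\rho$ and nowhere else, with minimal value $-\tfrac12\log\big(\tfrac1{\sqrt3}(\Gamma(1/3)/\Gamma(2/3))^{3}\big)=h_{\text{min}}$. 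Substituting back into the decomposition gives $h(E/K)\ge h_{\text{min}}$, with equality exactly when $E$ has everywhere good reduction and all its archimedean periods equal $\rho$; since $j(E)=0$ is equivalent to $E$ having complex multiplication by $\Z[\zeta_3]$ — and hence good reduction everywhere after a suitable base change — this is precisely the assertion of the theorem.
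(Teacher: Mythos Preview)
The paper does not give its own proof of Deligne's theorem; it is quoted as a known result with reference to \cite{del}. That said, the paper does implicitly re-derive the minimality of $V$ at $\varrho$ in Lemma~\ref{l6.5} (combined with Proposition~\ref{p3.1}), so there is something to compare against. Your reduction to the archimedean problem and the identification of critical points as zeros of $E_2^{\ast}$ is the same as the paper's; the difference lies in how the global minimum is pinned down. You aim to classify \emph{all} zeros of $E_2^{\ast}$ in the fundamental domain and then compare the two candidates $i$ and $\varrho$; you correctly flag this classification as the delicate step and do not actually carry it out. The paper avoids this entirely: Lemma~\ref{l6.5} first shows that for fixed $y$ the function $|\Delta(x+iy)|$ is maximal at $x\in\tfrac12+\Z$ (a direct product-expansion argument), so $V(\tau)\ge V(-\tfrac12+i\,\im\tau)$, and then invokes Masser's result that the \emph{only} zero of $E_2^{\ast}$ on the half-line $x=-\tfrac12$ is $\varrho$, giving monotonicity of $V$ along that half-line and hence $V(\tau)\ge V(\varrho)$ globally. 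This two-step reduction is cheaper than a full zero-count for $E_2^{\ast}$ and also makes the comparison with $i$ unnecessary, since $i$ never appears on the half-line. Your superharmonicity observation is correct but, as you use it, only rules out interior minima of $\log g$; it does not by itself force the maximum to sit at $i$ or $\varrho$ without the zero classification.

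Two small points. Your archimedean local term $\lambda(\tau)=-\tfrac12\log\bigl(2\pi\,\im(\tau)\,|\eta(\tau)|^{4}\bigr)$ is off from the paper's $V(\tau)+\tfrac12\log\pi$ by an additive constant $\tfrac12\log 2$; this is a harmless normalization slip, but it means your decomposition and your final Chowla--Selberg value cannot both be exactly right as written. And in the last sentence, ``$j(E)=0\Rightarrow$ good reduction everywhere after a suitable base change'' is true but beside the point here: Deligne's statement concerns $E/K$ itself, and equality in your inequality already forces $\mathfrak{d}_{E/K}=(1)$ over the original field, so no appeal to potential good reduction is needed (indeed, Theorem~\ref{nogap} of the paper shows that curves with $j=0$ but unstable reduction over $K$ can have $h(E/K)$ strictly above $h_{\text{min}}$).
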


The minimal value of the Faltings height arises from elliptic curves with complex multiplication by the ring of integers of $\Q (\sqrt{-3})$. The Chowla-Selberg formula and its generalization by Nakkajima and Taguchi \cite{nt} express the Faltings heights of CM-curves in terms of finite sums involving the Gamma-function. However, it is only in the CM-case that such explicit expressions for the Faltings height are known.  \\

Zhang showed that heights on arithmetic varieties induced by hermitian line bundles with smooth metric have isolated minima (see Corollary (5.7) of \cite{zh}). It is therefore natural to ask if the minimum of the Faltings height on elliptic curves is also isolated. If so, one can try to determine an explicit gap and look for a second minimum. The stable Faltings height $h_{\text{stab}}$ (defined in Section 2) is obtained by extending the number field such that the curve has everywhere semistable reduction. We show that, similar to the Weil height (see Definition \ref{whdef}), the values of the Faltings height can get arbitrarily close to $h_{\text{min}}$, while for the stable Faltings height there is a gap behind $h_{\text{min}}$. More precisely, the main result of this article is 
\begin{theorem}\label{gap}
There is a $C>0$ such that for every elliptic curve $E/K$ we have
\begin{equation*}
h_{\text{stab}}(E/K)=h_{\text{min}}\qquad \text{or}\qquad h_{\text{stab}}(E/K) \geq h_{\text{min}} + C ,
\end{equation*}
or equivalently, for every elliptic curve $E/K$ with $j$-invariant not equal to $0$ we have
\begin{equation*}
h(E/K) \geq h_{\text{min}} + C .
\end{equation*}
Moreover, we can choose $C =4.601 \cdot 10^{-18}$.
\end{theorem}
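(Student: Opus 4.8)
The plan is to run the whole argument through the standard explicit formula for the Faltings height of a semistable elliptic curve. If $E/K$ is semistable, let $\mathfrak{D}_{E/K}$ be its minimal discriminant ideal and put $\|\Delta\|(\tau):=(2\pi)^{12}|\Delta(\tau)|(\im\tau)^6$ for the Petersson-type norm of the discriminant modular form; in Deligne's normalization
\[
h_{\text{stab}}(E/K)=\frac{1}{12[K:\Q]}\log N_{K/\Q}(\mathfrak{D}_{E/K})-\frac{1}{12[K:\Q]}\sum_{\sigma:K\hookrightarrow\C}\log\|\Delta\|(\tau_\sigma),
\]
where $\tau_\sigma\in\uh$ is any point with $j(\tau_\sigma)=\sigma(j(E))$. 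Set $\Phi(\tau):=-\tfrac1{12}\log\|\Delta\|(\tau)-h_{\text{min}}$; then Deligne's theorem is precisely the statement $\Phi\geq0$ on $\uh$ with equality exactly on the $\mathrm{SL}_2(\Z)$-orbit of $\rho=e^{2\pi i/3}$ (and $\Phi(\rho)=0$ is the Chowla--Selberg evaluation of $\|\Delta\|(\rho)$). Since semistability gives $v(\mathfrak{D}_{E/K})=\maks(0,-v(j(E)))$ at every finite place, the discriminant term equals $\tfrac1{12}$ times the ``denominator part'' $h_f(j(E))$ of the absolute Weil height, so one gets the clean identity
\[
h_{\text{stab}}(E/K)-h_{\text{min}}=\tfrac1{12}\,h_f(j(E))+\tfrac1{[K:\Q]}\sum_{\sigma}\Phi(\tau_\sigma),
\]
a sum of two nonnegative terms. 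For $j(E)=0$ both vanish (a semistable curve with $j=0$ has everywhere good reduction), and in general the identity forces $h_{\text{stab}}(E/K)=h_{\text{min}}\iff j(E)=0$; together with $h(E/K)\geq h_{\text{stab}}(E/K)$ this yields the equivalence asserted in the statement. So it remains to bound the right-hand side below by an absolute positive constant when $j:=j(E)\neq0$, and after a base change we may assume $E/K$ semistable.

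The only arithmetic input I need is the elementary inequality
\[
\prod_{\sigma}|\sigma(j)|=|N_{K/\Q}(j)|\ \geq\ N_{K/\Q}(\mathfrak{D}_{E/K})^{-1}=e^{-[K:\Q]\,h_f(j)},
\]
obtained by writing $(j)=\mathfrak{a}\mathfrak{b}^{-1}$ with coprime integral ideals, using $N_{K/\Q}(\mathfrak{a})\geq1$, and noting that by semistability $\mathfrak{b}=\prod_{v(j)<0}\mathfrak{p}_v^{-v(j)}$ has norm $N_{K/\Q}(\mathfrak{D}_{E/K})$. Everything else is effective analytic control of $\Phi$, and two pointwise lower bounds are required. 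The first is a \emph{cusp estimate} $\Phi(\tau)\geq\tfrac1{12}\log^+|j(\tau)|-\tfrac12\log^+\log^+|j(\tau)|-c_0$ on all of $\uh$, with explicit $c_0$: near the cusp it comes from $\Delta(\tau)=q\prod_{n\geq1}(1-q^n)^{24}$ and $j(\tau)=q^{-1}+O(1)$ (so $\|\Delta\|(\tau)$, $|j(\tau)|$ and $\log|j(\tau)|$ are all pinned down in terms of $\im\tau$), and elsewhere it is a compactness statement. The second is a \emph{central estimate} $\Phi(\tau)\geq\kappa\,\minn(|j(\tau)|^{2/3},1)$ with explicit $\kappa>0$: since the hyperbolic Laplacian of $\log\|\Delta\|$ is constant and $\Phi$ is invariant under the order-$3$ stabilizer of $\rho$, the Hessian of $\Phi$ at $\rho$ is a positive scalar multiple of the identity, so $\Phi(\tau)=h_{\text{min}}+c_1|\tau-\rho|^2+O(|\tau-\rho|^3)$ with $c_1>0$; combining this with $|j(\tau)|\asymp|\tau-\rho|^3$ near $\rho$ (a triple zero) gives the bound there, and compactness covers the rest.

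With these, the proof of the lower bound splits on the size of $h(j)$. Summing the cusp estimate over all embeddings and using the concavity of $\log^+$ (Jensen) gives $h_{\text{stab}}(E/K)-h_{\text{min}}\geq\tfrac1{12}h(j)-\tfrac12\log^+h(j)-c_0$, which beats the target as soon as $h(j)\geq B$ for a suitable explicit $B$; so assume $h(j)\leq B$. Then, by the central estimate, AM--GM, and the arithmetic inequality,
\[
\tfrac1{[K:\Q]}\sum_\sigma\Phi(\tau_\sigma)\ \geq\ \kappa\Bigl(\prod_\sigma\minn(|\sigma(j)|,1)\Bigr)^{\!\frac{2}{3[K:\Q]}}\ \geq\ \kappa\,e^{-\frac23 h(j)}\ \geq\ \kappa\,e^{-\frac23 B},
\]
where the middle step uses $\prod_\sigma\minn(|\sigma(j)|,1)=\prod_{|\sigma(j)|<1}|\sigma(j)|\geq N_{K/\Q}(\mathfrak{D}_{E/K})^{-1}e^{-[K:\Q]h_\infty(j)}=e^{-[K:\Q]h(j)}$. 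Hence $C:=\kappa\,e^{-2B/3}$ works. The hard part is not conceptual but quantitative: one must make $c_0$ and $\kappa$ — hence $B$, hence $C$ — fully explicit and as large as possible, which means bounding $\|\Delta\|(\tau)$ from above and below near the cusp (controlling the product $\prod(1-q^n)^{24}$) and near $\rho$ (estimating $c_1$, equivalently the third Taylor coefficient of $j$ at $\rho$, i.e.\ $E_6(\rho)$, in terms of $\Gamma(1/3)$), and stitching these to uniform estimates on the intermediate compact part of the modular curve. The exponential loss $e^{-2B/3}$ with $B$ of only moderate size is exactly what drives the admissible constant down to the value $C=4.601\cdot10^{-18}$.
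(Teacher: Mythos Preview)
Your sketch is correct and follows the paper's strategy closely: the two key analytic inputs are the same, namely a Silverman-type lower bound $h(E/K)\geq\tfrac1{12}h(j_E)-\tfrac12\log(1+h(j_E))-c_0$ (your ``cusp estimate''), and a quantitative lower bound for $-\tfrac1{12}\log(|\Delta(\tau)|\im(\tau)^6)-h_{\text{min}}$ in terms of the distance of $\tau$ from the $\mathrm{SL}_2(\Z)$-orbit of $\varrho$ (your ``central estimate''). The paper proves the former explicitly with $c_0=2.071$ via Faisant--Philibert bounds on $j$, and obtains the latter by first reducing to the half-line $\re\tau=-\tfrac12$ using Masser's description of the zeros of the non-holomorphic $E_2$, then computing an explicit lower bound $C(\delta')$ directly from the product expansion of $\Delta$; this is more hands-on than your Hessian-plus-compactness argument but is what makes the constant effective.

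Where your outline genuinely differs is in the combination step. The paper introduces a proportion parameter $P\in(0,1)$: either at least a $P$-fraction of the embeddings have $|j_\sigma|\leq e^{-37.84/P}$, forcing $h(j_E)\geq 37.84$ and hence the gap via the Silverman bound, or else at least a $(1-P)$-fraction of the $\tau_\sigma$ lie outside a $\delta(P)$-neighborhood of $\varrho$, giving a gap of $(1-P)C(\delta'(P))$; optimizing numerically at $P\approx 0.964$ yields the stated $4.601\cdot10^{-18}$. Your AM--GM route, bounding $\tfrac1d\sum_\sigma\Phi(\tau_\sigma)\geq\kappa\bigl(\prod_\sigma\min(|j_\sigma|,1)\bigr)^{2/(3d)}\geq\kappa e^{-2h(j)/3}$, is a clean alternative and avoids the auxiliary parameter, but it is not clear it yields the \emph{same} numerical constant: you assert that the exponential loss ``is exactly what drives'' $C$ to $4.601\cdot10^{-18}$, but you have not actually carried out the optimization, and the two combination schemes need not produce identical numbers. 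So your sketch establishes the qualitative theorem and the shape of the bound, while the precise value in the ``Moreover'' clause still requires the explicit computations the paper performs.
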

For the proof we relate the Faltings height to the modular height (see Definition \ref{modu}) of an elliptic curve. Silverman estimated the Faltings height from below and above by the modular height \cite{cs}. In Section 3 we will employ estimates for the $j$-function by Faisant and Philibert \cite{fp} in order to determine an absolute constant for the estimate from below. This will be a key ingredient for the full proof given in Section 4. To study the growth of the Faltings height, we need a result by Masser \cite{mas} on the vanishing of the non-holomorphic Eisenstein series of weight $2$.\\

On the other hand, we have
\begin{theorem}\label{nogap}
For every $\eps >0$ there is a number field $K$ and an elliptic curve $E/K$ with $j$-invariant $0$ such that
\begin{equation*}
h_{\text{min}} < h(E/K) < h_{\text{min}} + \eps.
\end{equation*}
\end{theorem}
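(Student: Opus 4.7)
The plan is to invoke Deligne's theorem twice: $h_{\min}$ is attained precisely at $j=0$ curves with everywhere good reduction, and all $j=0$ elliptic curves over a common base $K$ have identical archimedean contributions to $h$ (their $\tau$-parameters lie in the $SL_2(\Z)$-orbit of $\zeta_6$ at each infinite place). Hence, for any $j=0$ elliptic curve $E/K$,
\begin{equation*}
h(E/K) = h_{\min} + \frac{1}{12[K:\Q]} \log\bigl|N_{K/\Q}(\Delta_{\min,E/K})\bigr|,
\end{equation*}
and the task reduces to producing, for each $\eps>0$, a $j=0$ curve whose minimal discriminant has small-but-positive log-norm relative to $[K:\Q]$.

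I would effect this by sextic twisting the minimizer over a tower of increasing degree. Let $E_0/\Q(\zeta_3)$ denote the $j=0$ CM curve with everywhere good reduction, and fix a rational prime $p\equiv 1\pmod 3$ (say $p=7$), so that $p$ splits in $\Q(\zeta_3)$ and yields a principal prime $\mathfrak{p}=(\pi)$ over $p$ by class number one. For each $n$ coprime to $6$, set $K_n:=\Q(\zeta_3,\sqrt[n]{\pi})$, of degree $2n$ over $\Q$; the polynomial $T^n-\pi$ is Eisenstein at $\mathfrak{p}$, so $\mathfrak{p}\mathcal{O}_{K_n}=\mathfrak{P}^n$ for a unique prime $\mathfrak{P}$ with $N_{K_n/\Q}(\mathfrak{P})=p$. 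By weak approximation combined with Hensel's lemma I would then choose $d\in K_n^*$ with $v_\mathfrak{P}(d)=1$, with $v_\mathfrak{Q}(d)\equiv 0\pmod 6$ at every other prime, and with $d$ locally a sixth power in each completion at a prime $\mathfrak{Q}\mid 6$. The sextic twist $E:=E_0^{(d)}/K_n$ then has $j=0$, good reduction at every prime other than $\mathfrak{P}$ (trivially at primes above $6$; at the remaining primes because $v_\mathfrak{Q}(d)\equiv 0\pmod 6$ together with $\mathfrak{Q}\nmid 6$ absorbs into a unit twist whose minimal discriminant has valuation zero), and additive reduction at $\mathfrak{P}$ with $v_\mathfrak{P}(\Delta_{\min,E})=2$ since $v_\mathfrak{P}(d)=1$ cannot be killed by any sixth-power Weierstrass substitution. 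Plugging $|N_{K_n/\Q}(\Delta_{\min,E})|=p^2$ into the formula above yields
\begin{equation*}
h(E/K_n) = h_{\min} + \frac{\log p}{12n},
\end{equation*}
which is strictly greater than $h_{\min}$ and lies below $h_{\min}+\eps$ as soon as $n>(\log p)/(12\eps)$.

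The main obstacle is the existence of $d$ with the prescribed local behavior. The natural candidate $\sqrt[n]{\pi}$ generates $\mathfrak{P}$ but need not be a sixth power in the (possibly wildly ramified) completions at primes above $2$ and $3$; uncorrected, the sextic twist would acquire extra, uncontrolled bad reduction there. Weak approximation applied to $K_n^*$ at the finite set $\{\mathfrak{P}\}\cup\{\mathfrak{Q}:\mathfrak{Q}\mid 6\}$ provides a global element realizing the desired local classes at these primes, and any extraneous non-multiples-of-$6$ valuations it picks up at a finite set of other primes can be cleared by a further weak-approximation step. Once this technical point is settled, the height computation is immediate.
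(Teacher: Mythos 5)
Your overall strategy is sound and matches the paper's: since $h_{\infty}(E/K)+\tfrac{1}{2}\log\pi = h_{\text{min}}$ for every $j=0$ curve, it suffices to make the non-archimedean part $h_0(E/K)=\frac{1}{12[K:\Q]}\log|N_{K/\Q}(\de_{E/K})|$ small but positive over a tower of fields of growing degree, and your reduction to that is correct. The gap is in the construction of the sextic twisting element $d$. Weak approximation lets you prescribe $d$ modulo local sixth powers at the finite set $\{\p\}\cup\{\qu : \qu\mid 6\}$, but the element you obtain will generically have nonzero valuations at finitely many \emph{other} primes; your ``further weak-approximation step'' to clear these introduces yet more such primes, and you have given no reason for the iteration to terminate. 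Closing it requires arithmetic input about $K_n$ (for instance that $[\p]$ lies in $Cl(K_n)^6$, or that the global units surject onto the relevant local sixth-power classes), none of which is claimed or obviously uniform in $n$. The natural candidate $d=\sqrt[n]{\pi}$ exactly generates $\p$ and therefore avoids all bad primes away from $6$, but there is no reason for it to be a local sixth power at primes above $2$ and $3$: for $\gcd(n,6)=1$ and $p\neq 2,3$ these primes are unramified, there are $O(n)$ of them with total residue degree $2n$, and each where the twist has additive reduction can contribute up to $11f_{\qu}\log 3$ to $\log|N(\de_{\min})|$; so the total contribution is $\Theta(n)$ and $h_0$ does \emph{not} tend to $0$. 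A secondary issue: no elliptic curve over $\Q(\sqrt{-3})$ has everywhere good reduction, so the $E_0/\Q(\zeta_3)$ you start from does not exist; you would have to start over a larger fixed base.

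The paper sidesteps the local problem at $2$ and $3$ with a different device. It fixes the degree-$12$ field $K=\Q(\sqrt[3]{4},\sqrt[4]{27}\zeta_8)$, containing $i$, $\varrho$, $\sqrt{3}$, and writes down two isomorphic Weierstra\ss{} models of a $j=0$ curve $E/K(\alpha)$: $E_1:Y^2+\alpha Y=X^3$, integral and minimal at primes above $2$ provided $\alpha$ is coprime to $2$, and $E_2:Y^2 = X^3+(\varrho^2-1)X^2-\varrho^2 X+\frac{i}{3\sqrt{3}}(\alpha^2-1)$, integral and minimal at primes above $3$ provided $\frac{1}{3\sqrt{3}}(\alpha^2-1)$ is an algebraic integer. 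Lemma~\ref{eisen} then manufactures, for any $n$, an Eisenstein $\alpha$ of degree $n$ for a bounded prime $p\equiv(-1)^n\bmod 9$ with $\alpha\equiv 1\pmod 9$, guaranteeing the integrality at $3$. The bad reduction is then localized at the primes over $p$ and one gets $h_0(E/K(\alpha))=\frac{\log p}{3n}$ exactly. If you want to rescue the twist-based approach, you need either a comparable explicit-equation device at $2$ and $3$, or a genuine proof (using properties of $Cl(K_n)$ and $\mco_{K_n}^*$) that the required $d$ exists.
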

We show this in Section 5 by constructing elliptic curves with $j$-invariant $0$ over an appropriate sequence of number fields. These curves have bad reduction at only one prime ideal lying over a totally ramified prime number that can be kept small.\\

The smallest value of the stable Faltings height on elliptic curves apart from $h_{\text{min}}$ that we could find by numerically testing elliptic curves with roots of unity and small Salem numbers as $j$-invariants in SAGE \cite{sage} is $h_1 = -0.74862817\dots$, attained at curves with $j$-invariant $1$ and everywhere good reduction. However, we could not prove that the stable Faltings height attains a second minimum.

\section*{Acknowledgements}
This article is based on the author's master's thesis of the same title submitted in December 2014 at TU-Darmstadt. We are deeply indebted to Prof.~Dr.~Philipp Habegger for supervising the thesis and giving much supportive advice in writing this paper. The possibility of a height gap emerged in a conversation between him and Dr.~Ariyan Javanpeykar at the CIRM in 2014. We would also like to thank Prof.~Dr.~Yuri Bilu, Prof.~Dr.~Lenny Fukshansky, Dr.~Ariyan Javanpeykar and Dr.~Lukas Pottmeyer for many helpful remarks.

\section{Definitions and Preliminaries}

In this section we define the modular height and the Faltings height of an elliptic curve. Throughout we denote by $K$ a number field and by $\ok$ its ring of integers. We write $E/K$ for an elliptic curve over $K$ and $j_E$ for its $j$-invariant. We denote by $\tau =x+iy$ a variable in the complex upper half-plane $\uh$ and write
$q=q(\tau)=e^{2\pi i \tau}$, $\de(\tau)=(2\pi)^{12}q\prod_{n\geq 1}(1-q^n)^{24}$ for the \emph{modular discriminant} and $j(\tau)=\frac{1}{q}+744+\dots$ for the \emph{modular $j$-function}. We also define the \emph{closed fundamental domain}
\begin{equation*}
\ov{\mathcal{F}}:=\lbrace \tau\in\uh : -\tfrac{1}{2} \leq\re\tau \leq \tfrac{1}{2} \text{ and } |\tau| \geq 1\rbrace.
\end{equation*} 

For every prime number $p$, we define a $p$-adic absolute value $|\cdot |_p$ on $\Q$ by $|x|_p := p^{-\order _p x}$ for $x\in \Q ^*$ and $|0|_p :=0$. A \emph{place} of a number field $K$ is an equivalence class of non-trivial absolute values on $K$. Let $M_K ^0$ denote the set of all non-archimedean places of $K$. It is well known that every non-archimedean place on $K$ uniquely corresponds to a non-zero prime ideal in $\ok$, so we will freely identify these two sets. Every non-archimedean place $\p\in M_K ^0$ restricts to a non-archimedean place on $\Q$ corresponding to some prime number $p$. We define $|\cdot |_\p$ to be the absolute value in $\p$ that restricts to $|\cdot|_p$. Let $K_\p$ denote the completion of $K$ with respect to the metric defined by $\p$. The \emph{local degree} of $\p$ is defined as $n_\p := [K_\p : \Q _p]$.

\begin{definition}\label{whdef}
The \emph{(absolute logarithmic) Weil height} of an algebraic number $\alpha\in K$ is defined as
\begin{equation*}
h(\alpha):=\frac{1}{[K:\Q]}\left(\sum_{\p\in M_K ^0}n_\p\log \maks \{1,|\alpha|_\p\}+\sum_{\s :K\hookrightarrow \C}\log \maks \{1,|\s (\alpha)|\}\right),
\end{equation*}
where the second sum runs over all embeddings $\s : K\hookrightarrow \C$.
\end{definition}
The product formula implies that the Weil height does not depend on the number field $K$ containing $\alpha$. If $\alpha\in \Q^*$ and $\alpha =\frac{s}{t}$ for coprime integers $s,t$, then we simply have
\begin{equation*}
h(\alpha)=\log\maks \{|s|, |t| \}.
\end{equation*}

\begin{definition}\label{modu}
The \emph{modular height} of an elliptic curve over a number field is the absolute logarithmic Weil height of its $j$-invariant.
\end{definition}

Now we define the Faltings height of an elliptic curve $E/K$. Let $\de_{E/K}$ denote the \emph{minimal discriminant} of $E/K$. For every embedding $\s: K\hookrightarrow \C$, we write $j_\s$ for $\s(j_E)$ and choose a $\tau _\s \in\uh$ such that $j(\tau _{\s})=j_\s$.
\begin{definition}\label{fhdef}
The \emph{Faltings height of $E/K$} is given by
\begin{equation*}
\text{\begin{small}$
h(E/K) := \frac{1}{12[K:\Q]}\left(\log | N_{K/\Q}(\de_{E/K})| - \sum_{\s: K\hookrightarrow \C}\log(| \de(\tau_{\s})| \im (\tau_{\s})^6)\right)+\frac{1}{2}\log\pi
$\end{small}}.
\end{equation*} 
\end{definition}

\begin{remark}
The $\tfrac{1}{2}\log\pi$-term has conventional reasons, as we follow the definition of Deligne \cite{del}. There are several normalizations of the Faltings height going around, all of them differing only by an additive constant. Faltings's original definition \cite{fal} does not have the $\tfrac{1}{2}\log\pi$-term. Silverman's definition \cite{cs} differs from Faltings's by $-2\log 2\pi$, because he defines the $\de$-function with a prefactor of $(2\pi)^{-12}$ instead of $(2\pi)^{12}$. Of course, our results on the existence and size of a gap are independent of the chosen normalization.
\end{remark}

The definition is motivated by Arakelov theory. For an abelian variety $A/K$ with N\'eron model $\mathcal{A}/\ok$, every embedding $\s :K\hookrightarrow \C$ induces a norm on the sheaf of N\'eron differentials $\omega_{\mathcal{A}/\ok}$. These norms make $\omega_{\mathcal{A}/\ok}$ into a metrized line bundle on $\ok$. The Faltings height of $A/K$ is originally defined to be the normalized Arakelov degree of $\omega_{\mathcal{A}/\ok}$ (see \cite{fal}, \S 3). In case of elliptic curves this is equivalent to Definition \ref{fhdef} (see \cite{cs}, Proposition 1.1.).\\

The Faltings height of $E/K$ depends on the field $K$. However, it is the same for every field over which $E$ has everywhere semistable reduction. To get rid of the dependence, we define the \emph{stable Faltings height $h_{\text{stab}}$} by choosing a finite field extension $L/K$ such that $E/L$ has everywhere semistable reduction and setting $h_{\text{stab}}(E/K):= h(E/L)$. The stable Faltings height does not depend on the field $K$ and the chosen field extension. We have $h_{\text{stab}}(E/K)\leq h(E/K)$ for every elliptic curve $E/K$ with equality if and only if $E/K$ has everywhere semistable reduction.\\

Sometimes we want to split the Faltings height into an archimedean and a non-archimedean part. We set 
\begin{equation*}
h_0 (E/K) := \frac{1}{12[K:\Q]}\log | N_{K/\Q}(\de_{E/K})|
\end{equation*}
and
\begin{equation*}
h_{\infty} (E/K) := -\frac{1}{12[K:\Q]}\sum_{\s: K\hookrightarrow \C}\log(| \de(\tau_{\s})| \im (\tau_{\s})^6),
\end{equation*}
so that 
\begin{equation*}
h(E/K)=h_0 (E/K)+h_{\infty} (E/K)+\tfrac{1}{2}\log\pi.
\end{equation*}
Note that $h_0$ is non-negative. The next proposition tells us how the Faltings height behaves under field extensions.

\begin{proposition}\label{p3.1}
Let $E/K$ be an elliptic curve and $L/K$ a finite field extension. Then we have
\begin{itemize}
\item[(i)] $h_{\infty}(E/L)=h_{\infty}(E/K)$
\item[(ii)] $h_{0}(E/L)\leq h_{0}(E/K)$ and $h_{0}(E/L)=h_{0}(E/K)$ if $E/K$ has everywhere  semistable reduction.
\item[(iii)] $h_{0}(E/K)=0$ if and only if $E/K$ has everywhere good reduction.
\end{itemize}
\end{proposition}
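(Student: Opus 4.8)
The statement to prove is Proposition~\ref{p3.1}, which has three parts describing how $h_0$ and $h_\infty$ behave under a finite extension $L/K$.

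\medskip

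\textbf{Plan.} The proof rests on the theory of minimal discriminants and N\'eron models for elliptic curves over local fields, applied place by place. For part (i), the key observation is that $h_\infty$ depends only on the $j$-invariant: the quantity $|\de(\tau_\s)|\im(\tau_\s)^6$ is a well-defined function of $j_\s = j(\tau_\s)$ (independent of the choice of $\tau_\s$ in a fundamental domain, since $\de(\tau)\im(\tau)^6$ is $\mathrm{SL}_2(\Z)$-invariant), and $j_E \in K \subseteq L$. So I would show that $\sum_{\s' : L \hookrightarrow \C} \log(|\de(\tau_{\s'})|\im(\tau_{\s'})^6)$, divided by $[L:\Q]$, equals the same expression over $K$ divided by $[K:\Q]$. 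This follows because each embedding $\s : K \hookrightarrow \C$ extends to exactly $[L:K]$ embeddings $\s' : L \hookrightarrow \C$, all of which send $j_E$ to $\s(j_E)$, hence contribute the identical summand; the factor $[L:K]$ cancels against the change in normalization $[L:\Q] = [L:K][K:\Q]$.

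\medskip

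For part (ii), I would work locally. Fix a prime $\p$ of $K$ and the primes $\mathfrak{q}_1, \dots, \mathfrak{q}_g$ of $L$ above it, with ramification indices $e_i$ and residue degrees $f_i$, so that $\sum_i e_i f_i = [L:K]$ and $n_{\mathfrak{q}_i} = e_i f_i n_\p$. The contribution of $\p$ to $12[K:\Q]\, h_0(E/K)$ is $n_\p \cdot v_p(\de_{E/K})$-type terms, more precisely $\log|N_{K/\Q}(\de_{E/K})|$ collects $\mathrm{ord}_\p(\de_{E/K}) \cdot f_{\p/p}\log p$ over all $\p$. The standard fact I need is that the minimal discriminant exponent can only decrease under base change: $\mathrm{ord}_{\mathfrak{q}_i}(\de_{E/L}) \le e_i \cdot \mathrm{ord}_\p(\de_{E/K})$, with equality when $E/K_\p$ is already semistable (because the minimal Weierstrass equation stays minimal, resp.\ in the multiplicative case the valuation of the discriminant equals that of $j^{-1}$ times a unit and scales by $e_i$, and in the good-reduction case both sides are $0$). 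Summing over $i$ and all $\p$, weighting correctly, and dividing by $12[L:\Q]$ versus $12[K:\Q]$ gives the inequality, with equality under the everywhere-semistable hypothesis.

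\medskip

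For part (iii), $h_0(E/K) = 0$ means $N_{K/\Q}(\de_{E/K})$ is a unit, i.e.\ $\de_{E/K} = \ok$, i.e.\ $\mathrm{ord}_\p(\de_{E/K}) = 0$ for every $\p$; by the criterion that $E$ has good reduction at $\p$ iff the minimal discriminant has valuation $0$ there (Silverman, \emph{Advanced Topics}, or \emph{AEC} VII), this is exactly everywhere good reduction. I expect the main obstacle to be part (ii): one must carefully justify the inequality $\mathrm{ord}_{\mathfrak{q}}(\de_{E/L}) \le e \cdot \mathrm{ord}_\p(\de_{E/K})$ in the case of additive reduction, where the minimal model genuinely improves upon base change (via Tate's algorithm / the semistable reduction theorem), and to handle the bookkeeping of ramification and residue degrees so that the normalization factors cancel cleanly. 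Everything else is a routine transcription of local facts about Weierstrass equations into the global height formula.
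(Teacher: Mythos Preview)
Your proposal is correct and follows essentially the same approach as the paper. The only cosmetic difference is in part~(ii): the paper states the key fact globally as the divisibility $\de_{E/L}\mid \de_{E/K}\mco_L$ (with equality in the semistable case) and then applies $N_{L/\Q}(\de_{E/K}\mco_L)=N_{K/\Q}(\de_{E/K})^{[L:K]}$, whereas you unpack this prime by prime via $\mathrm{ord}_{\mathfrak{q}}(\de_{E/L})\le e\cdot\mathrm{ord}_\p(\de_{E/K})$ and sum using $\sum_i e_i f_i=[L:K]$; these are the same argument.
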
 

\begin{proof}
\begin{itemize}
\item[(i)]
For every embedding $\s :K\hookrightarrow \C$ there are $[L:K]$ embeddings of $L$ that restrict to $\s$ on $K$. For every such embedding $\rho :L\hookrightarrow \C$, we have $\rho(j_E)=\s(j_E)$, so we may assume that $\tau_{\rho}=\tau_\s$. Hence
\begin{equation*}
\log(| \de(\tau_{\s})| \im (\tau_{\s})^6)=\frac{1}{[L:K]}\sum_{\substack{\rho :L\hookrightarrow \C \\ \rho\mid\s}}\log(| \de(\tau_{\rho})| \im (\tau_{\rho})^6).
\end{equation*}
Summing over all $\s$ and averaging implies the statement.
\item[(ii)]
The ideal $\de _{E/L}$ divides $\de _{E/K}\mco_L$ with equality if $E/K$ has everywhere semistable reduction. Thus 
\begin{equation*}
N_{L/\Q}(\de_{E/L}) \leq N_{L/\Q}(\de_{E/K}\mco_L) =N_{K/\Q}(\de_{E/K})^{[L:K]}
\end{equation*} 
with equality if $E/K$ has everywhere semistable reduction.
\item[(iii)]
$E/K$ has everywhere good reduction if and only if $\de _{E/K}=\ok$ if and only if $h_0 (E/K)=0$.
\end{itemize}
\end{proof}
In particular, if $E/K$ has everywhere potential good reduction, then $h_{\text{stab}}(E/K)=h_{\infty}(E/K)+\tfrac{1}{2}\log\pi$. A well-known result from the theory of elliptic curves states that this is the case if and only if $j_E$ is an algebraic integer.

\section{An Estimate between Faltings Height and Modular Height}

In this section we estimate the Faltings height explicitly from below against the modular height. We factor the principal ideal $(j_E)$ as $(j_E)=\mathfrak{A}\mathfrak{D}^{-1}$ for coprime integral ideals $\mathfrak{A}, \mathfrak{D}\subset \ok$. Then $\mathfrak{D}$ divides $\de_{E/K}$ with equality if and only if $E/K$ has everywhere semistable reduction. Hence we can write $\de _{E/K}=\mathfrak{D}\gamma_{E/K}$ for some integral ideal $\gamma_{E/K}$, such that $\gamma_{E/K}=\ok$ if and only if $E/K$ has everywhere semistable reduction. The ideal $\gamma_{E/K}$ is called the \emph{unstable discriminant of $E/K$}. Silverman proved the following result.

\begin{proposition}[\cite{cs}, Proposition 2.1]\label{p5.1}
There are constants $C_1 , C_2$ such that for every elliptic curve $E/K$ we have
\begin{equation}\label{e5.2}
\begin{split}
C_1&\leq \tfrac{1}{12}h(j_E) + \tfrac{1}{12[K:\Q]}\log |N_{K/\Q} (\gamma_{E/K})| - h(E/K) \\
&\leq \tfrac{1}{2}\log (1+h(j_E)) +C_2.
\end{split}
\end{equation}
In particular, if $E/K$ has everywhere semistable reduction, there is a constant $C$ with
\begin{equation*}
\left| \tfrac{1}{12}h(j_E) -h(E/K)\right|\leq  \tfrac{1}{2}\log (1+h(j_E))+C.
\end{equation*}
\end{proposition}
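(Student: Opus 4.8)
The plan is to prove Proposition \ref{p5.1} by carefully splitting the difference $\tfrac{1}{12}h(j_E) + \tfrac{1}{12[K:\Q]}\log |N_{K/\Q}(\gamma_{E/K})| - h(E/K)$ into its archimedean and non-archimedean contributions and bounding each separately. Recall that $h(E/K) = h_0(E/K) + h_\infty(E/K) + \tfrac12\log\pi$, and that $\de_{E/K} = \mathfrak{D}\gamma_{E/K}$, so that $\tfrac{1}{12[K:\Q]}\log|N_{K/\Q}(\de_{E/K})| = \tfrac{1}{12[K:\Q]}\log|N_{K/\Q}(\mathfrak{D})| + \tfrac{1}{12[K:\Q]}\log|N_{K/\Q}(\gamma_{E/K})|$. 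On the other hand, the Weil height of $j_E$ decomposes as $h(j_E) = \tfrac{1}{[K:\Q]}\bigl(\sum_{\p}n_\p\log^+|j_E|_\p + \sum_\s\log^+|j_\s|\bigr)$, and by the product formula together with $(j_E) = \mathfrak{A}\mathfrak{D}^{-1}$ one identifies $\tfrac{1}{[K:\Q]}\sum_\p n_\p\log^+|j_E|_\p = \tfrac{1}{[K:\Q]}\log|N_{K/\Q}(\mathfrak{D})|$. Hence the non-archimedean parts of $\tfrac{1}{12}h(j_E) + \tfrac{1}{12[K:\Q]}\log|N_{K/\Q}(\gamma_{E/K})|$ and of $h_0(E/K)$ cancel exactly, and the whole quantity to be estimated reduces to an archimedean expression: a weighted average over the embeddings $\s$ of $\tfrac{1}{12}\log^+|j_\s| + \tfrac{1}{12}\log(|\de(\tau_\s)|\,\im(\tau_\s)^6) - \tfrac12\log\pi$.

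So the core of the argument is purely complex-analytic: for $\tau \in \uh$ with $j = j(\tau)$, I need two-sided bounds for the quantity $F(\tau) := \tfrac{1}{12}\log^+|j(\tau)| + \tfrac{1}{12}\log\bigl(|\de(\tau)|\,\im(\tau)^6\bigr)$. Since $j$, $\de$ and $\im(\tau)^{\pm1}$ are all $\mathrm{SL}_2(\Z)$-(quasi-)invariant in the right combinations (note $|\de(\tau)|\im(\tau)^6$ is genuinely $\mathrm{SL}_2(\Z)$-invariant), I may assume $\tau \in \ov{\mathcal F}$. On $\ov{\mathcal F}$ there are two regimes. When $\im\tau$ is large, $q = e^{2\pi i\tau}$ is small, so $\log|\de(\tau)| = 12\log(2\pi) - 2\pi\im\tau + 24\re\log\prod_{n\ge1}(1-q^n) \approx 12\log(2\pi) - 2\pi\im\tau$, while $\log|j(\tau)| = -\log|q| + O(1) = 2\pi\im\tau + O(1)$; the leading $\pm2\pi\im\tau$ terms cancel, leaving $F(\tau) = \log(2\pi) + \tfrac12\log\im\tau + O(1)$ — note this is where the $\tfrac12\log(1+h(j_E))$ growth term on the right of \eqref{e5.2} comes from, since $h(j_E)$ grows like $\tfrac{1}{[K:\Q]}\sum_\s\log^+|j_\s| \sim$ a multiple of $\im\tau$. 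When $\tau$ ranges over the compact part $\{\tau \in \ov{\mathcal F} : \im\tau \le T\}$ for any fixed $T$, everything is continuous and $\de$ is non-vanishing on $\uh$, so $F$ is bounded; one also checks $j$ does not blow up there. Assembling these, one gets $C_1 \le \tfrac{1}{12}\log^+|j_\s| + \tfrac{1}{12}\log(|\de(\tau_\s)|\im(\tau_\s)^6) - \tfrac12\log\pi \le \tfrac12\log(1 + \log^+|j_\s|) + C_2$ pointwise in $\s$ with absolute constants, and averaging over $\s$ — using concavity of $x \mapsto \tfrac12\log(1+x)$ for the upper bound, so that the average of $\tfrac12\log(1+\log^+|j_\s|)$ is at most $\tfrac12\log(1 + \tfrac{1}{[K:\Q]}\sum_\s\log^+|j_\s|) \le \tfrac12\log(1 + h(j_E))$ — yields the proposition.

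The last assertion is then immediate: if $E/K$ has everywhere semistable reduction then $\gamma_{E/K} = \ok$, so $\log|N_{K/\Q}(\gamma_{E/K})| = 0$, and \eqref{e5.2} becomes $C_1 \le \tfrac{1}{12}h(j_E) - h(E/K) \le \tfrac12\log(1+h(j_E)) + C_2$; taking $C = \max\{|C_1|, |C_2|\}$ and noting $-\tfrac12\log(1+h(j_E)) - C \le C_1 \le \tfrac{1}{12}h(j_E) - h(E/K)$ gives the two-sided bound $|\tfrac{1}{12}h(j_E) - h(E/K)| \le \tfrac12\log(1+h(j_E)) + C$.

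I expect the main obstacle to be the archimedean analysis when $\tau$ is near the boundary between the two regimes and, more subtly, making sure the $\log^+$ truncation in the Weil height is handled correctly: $\log^+|j_\s|$ rather than $\log|j_\s|$ appears, and when $|j_\s| \le 1$ (which forces $\tau_\s$ into a bounded region) one must separately confirm $\log(|\de(\tau_\s)|\im(\tau_\s)^6)$ stays bounded below, i.e. that $\de$ stays away from $0$ and $\im\tau_\s$ stays away from $0$ on that region — both of which hold since $\{|j| \le 1\}$ pulls back to a compact subset of $\ov{\mathcal F}$ bounded away from the real axis. Keeping the constants genuinely absolute (independent of $K$ and of the individual $\tau_\s$) throughout the averaging is the bookkeeping one has to be careful about.
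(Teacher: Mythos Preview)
Your approach is correct and matches the standard strategy. Note, however, that the paper does not actually prove Proposition~\ref{p5.1}: it is quoted from Silverman~\cite{cs}, with the explicit value $C_1=0.72$ attributed to Gaudron--R\'emond~\cite{gr}. What the paper \emph{does} prove is the explicit upper-bound version, Proposition~\ref{p5.4}, and its proof follows precisely the outline you give: the non-archimedean cancellation you describe is exactly equation~\eqref{e5.15}; the archimedean bounds on $\log^+|j(\tau)|$ versus $\log|\de(\tau)|$ and $\log\im\tau$ are handled via the Faisant--Philibert estimates (Lemmas~\ref{l5.2}--\ref{l5.4}) rather than a soft compactness-plus-$q$-expansion argument, but the underlying mechanism (cancellation of the $\pm 2\pi\im\tau$ terms, leaving a $\tfrac12\log\im\tau$ contribution) is identical; and your concavity step for averaging $\tfrac12\log(1+\log^+|j_\s|)$ is exactly the AM--GM inequality used in~\eqref{e5.4}.

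The only point where your sketch is genuinely less sharp than the paper is that you appeal to compactness for the bounded-$\im\tau$ regime, which suffices for existence of $C_1,C_2$ but not for explicit constants; the paper's use of the Faisant--Philibert lemmas is what allows $C_2=2.071$ to be extracted. For the bare statement of Proposition~\ref{p5.1} as written, though, your argument is complete.
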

Gaudron and R\'emond showed that one can choose $C_1=0.72$ (cf.~\cite{gr}, Lemme 7.9). They proved the result for curves with everywhere semistable reduction, but it easily extends to the unstable case. We want determine an absolute $C_2$ in \eqref{e5.2} in order to find a lower bound for the Faltings height in terms of the modular height. In fact, we show that one can choose $C_2=2.071$.
\begin{proposition}\label{p5.4}
Let $E/K$ be an elliptic curve over a number field $K$ with $j$-invariant $j_E$. Then 
\begin{equation*}
h(E/K)> \tfrac{1}{12}h(j_E) -\tfrac{1}{2}\log(1+h(j_E))+ \tfrac{1}{12[K:\Q]}\log |N_{K/\Q} (\gamma_{E/K})|-2.071.
\end{equation*}
\end{proposition}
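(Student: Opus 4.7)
The plan is to follow Silverman's strategy from \cite{cs} while tracking all constants explicitly, using the Faisant--Philibert estimates \cite{fp} for the $j$-function on the fundamental domain.

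First, I reformulate the inequality. From $h(E/K) = h_0(E/K) + h_\infty(E/K) + \tfrac{1}{2}\log\pi$, the factorization $\de_{E/K} = \mathfrak{D}\gamma_{E/K}$, and the product-formula identity $h(j_E) = \tfrac{1}{[K:\Q]}\log|N_{K/\Q}(\mathfrak{D})| + h_\infty(j_E)$, where $h_\infty(j_E) = \tfrac{1}{[K:\Q]}\sum_\s \log\max\{1, |j(\tau_\s)|\}$, the target inequality is equivalent to
\begin{equation*}
\frac{1}{12[K:\Q]}\sum_{\s: K\hookrightarrow \C}\log f(\tau_\s) < \tfrac{1}{2}\log(1+h(j_E)) + 2.071 + \tfrac{1}{2}\log\pi,
\end{equation*}
where $f(\tau) := |\de(\tau)|\im(\tau)^6 \max\{1,|j(\tau)|\}$. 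The function $f$ is $SL_2(\Z)$-invariant, so we may choose $\tau_\s \in \ov{\mathcal{F}}$.

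The heart of the argument is an explicit pointwise bound
\begin{equation*}
\log f(\tau) \leq C_4 + 6\log\bigl(1+\log\max\{1,|j(\tau)|\}\bigr) \qquad (\tau \in \ov{\mathcal{F}}),
\end{equation*}
with $C_4$ an absolute constant. Using $j(\tau)\de(\tau) = (2\pi)^{12}E_4(\tau)^3$, for $|j(\tau)|\geq 1$ we have $f(\tau) = (2\pi)^{12}|E_4(\tau)|^3\im(\tau)^6$. The Faisant--Philibert estimates give $\im(\tau) \leq \tfrac{1}{2\pi}\log|j(\tau)| + c$ on $\ov{\mathcal{F}}$ with explicit $c$, while the $q$-expansion of $E_4$ controls $|E_4(\tau)| - 1$ exponentially in $\im\tau$. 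For $|j(\tau)|<1$ the point $\tau$ sits in a small neighborhood of $e^{i\pi/3}$ (where $j$ vanishes on $\ov{\mathcal{F}}$), and $f(\tau) = |\de(\tau)|\im(\tau)^6$ is bounded there by an explicit constant coming from $\eta(e^{i\pi/3})$. The cusp asymptotic $\log f(\tau) \sim 6\log(2\pi) + 6\log\log|j(\tau)|$ as $\im\tau \to \infty$ dictates the shape of the right-hand side.

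Averaging over embeddings via Jensen's inequality (concavity of $x\mapsto \log(1+x)$) and using $\tfrac{1}{[K:\Q]}\sum_\s \log\max\{1, |j(\tau_\s)|\} = h_\infty(j_E) \leq h(j_E)$ gives
\begin{equation*}
\frac{1}{[K:\Q]}\sum_\s \log\bigl(1 + \log\max\{1,|j(\tau_\s)|\}\bigr) \leq \log\bigl(1 + h(j_E)\bigr),
\end{equation*}
so $\tfrac{1}{12[K:\Q]}\sum_\s \log f(\tau_\s) \leq \tfrac{C_4}{12} + \tfrac{1}{2}\log(1+h(j_E))$. The asserted constant $C_2 = 2.071$ is then obtained provided $\tfrac{1}{12}C_4 \leq 2.071 + \tfrac{1}{2}\log\pi \approx 2.643$, i.e.\ $C_4 \leq 31.72\ldots$. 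Since the cusp asymptotic forces $C_4 \geq 6\log(2\pi) \approx 11.03$ but otherwise there is ample margin, the main technical obstacle is converting the Faisant--Philibert bounds into a clean numerical uniform estimate on $\ov{\mathcal{F}}$ with a concrete $C_4$, and controlling $|\de(\tau)|\im(\tau)^6$ near $e^{i\pi/3}$ to the required precision.
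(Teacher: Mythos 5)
Your reformulation to the pointwise target
\begin{equation*}
\frac{1}{12[K:\Q]}\sum_{\s}\log f(\tau_\s) < \tfrac{1}{2}\log(1+h(j_E)) + 2.071 + \tfrac{1}{2}\log\pi, \qquad f(\tau):=|\de(\tau)|\im(\tau)^6\max\{1,|j(\tau)|\},
\end{equation*}
is correct, and the global architecture (peel off the non-archimedean part via $\de_{E/K}=\mathfrak{D}\gamma_{E/K}$, prove a uniform pointwise bound on $\ov{\mathcal{F}}$, then average by Jensen/AM--GM) is the same as the paper's. The genuinely different step is the pointwise archimedean estimate. The paper bounds the two factors separately --- $\log\max\{1,|j(\tau)|\}\leq 2\pi\im\tau+7.09$ (Lemma \ref{l5.3}) and $\log|\de(\tau)|<-2\pi\im\tau+22.16$ (Lemma \ref{l5.4}) --- so the $2\pi\im\tau$ terms cancel to give $\log(|\de|\max\{1,|j|\})<29.25$; with $6\log\im\tau\leq 2.46+6\log\log\max\{e,|j|\}$ from Lemma \ref{l5.2}(iii) this yields $C_4\approx 31.71$, only barely under your required threshold $31.72$. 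You instead make the cancellation algebraic via $j\de=(2\pi)^{12}E_4^3$, so for $|j(\tau)|\geq 1$ one just needs $|E_4(\tau)|\leq 1+240\sum_{n\geq 1}\sigma_3(n)e^{-\pi\sqrt{3}n}<2.1$ on $\ov{\mathcal{F}}$, giving $\log(|\de||j|)=12\log(2\pi)+3\log|E_4|\lesssim 24.3$ and $C_4\lesssim 26.8$; this is cleaner and has genuine slack, which justifies your ``ample margin'' remark for your route (though not for the paper's). Two details to fix when you flesh this out: the bound $\im\tau\leq\tfrac{1}{2\pi}\log|j(\tau)|+c$ is not literally in Faisant--Philibert (their Lemma \ref{l5.2}(iii) has slope $\tfrac{3}{2}$, which is all you need since only $\log\im\tau$ enters); and for $|j(\tau)|<1$ the simplest uniform bound is $\log(|\de(\tau)|\im(\tau)^6)=-12V(\tau)\leq -12V(\varrho)\approx 2.12$, which holds on all of $\ov{\mathcal{F}}$ by Deligne's theorem, so no separate neighborhood analysis near $e^{i\pi/3}$ is needed.
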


Before we prove Proposition \ref{p5.4} we need some preparation. To treat the archimedean part we apply estimates for the $j$-function by Faisant and Philibert.
\begin{lemma}[\cite{fp}, \S 2 Lemme 1]\label{l5.2}
\begin{itemize}
\item[(i)] For every $\tau\in\uh$ we have $|j(\tau)|\leq j(i\im\tau)$.
\item[(ii)] For every $y\geq 1$ we have $j(iy)\leq e^{2\pi y} +1193$.
\item[(iii)] For every $\tau\in\ov{\mathcal{F}}$ we have $\im\tau\leq \tfrac{3}{2} \log \maks \lbrace e, | j(\tau)|\rbrace$.
\end{itemize}
\end{lemma}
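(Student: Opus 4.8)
The plan is to deduce all three inequalities from the $q$-expansion $j(\tau)=q^{-1}+744+\sum_{n\geq 1}c_n q^n$, using the classical facts that every coefficient $c_n$ is a non-negative integer and that $j(i)=1728$. Write $y=\im\tau$, so that $|q|=e^{-2\pi y}$, and set $g(y):=\sum_{n\geq 1}c_n e^{-2\pi n y}$. For $y\geq 1$ a termwise comparison (legitimate because $c_n\geq 0$ and $e^{-2\pi n y}\leq e^{-2\pi n}$) gives
\[
0\leq g(y)\leq g(1)=\sum_{n\geq 1}c_n e^{-2\pi n}=j(i)-e^{2\pi}-744=984-e^{2\pi}<449,
\]
where the final bound uses $e^{2\pi}=535.49\dots>535$. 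This single tail estimate does most of the work.

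For (i) I would apply the triangle inequality directly: since $c_n\geq 0$,
\[
|j(\tau)|\leq |q|^{-1}+744+\sum_{n\geq 1}c_n|q|^n=e^{2\pi y}+744+g(y)=j(i\im\tau),
\]
the last equality because $q$ evaluated at $i\im\tau$ is the positive real number $e^{-2\pi y}$. For (ii), if $y\geq 1$ then $j(iy)=e^{2\pi y}+744+g(y)<e^{2\pi y}+744+449=e^{2\pi y}+1193$ by the displayed bound on $g$.

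For (iii), let $\tau\in\ov{\mathcal{F}}$ and $y=\im\tau$ (so $y\geq\sqrt{3}/2$; only the position of $y$ relative to $3/2$ will matter). If $y\leq 3/2$, then $y\leq\tfrac32=\tfrac32\log e\leq\tfrac32\log\maks\{e,|j(\tau)|\}$ and we are done. If $y>3/2$, the reverse triangle inequality together with $g(y)\leq g(1)<449$ gives
\[
|j(\tau)|\geq |q|^{-1}-744-g(y)>e^{2\pi y}-1193,
\]
and the right-hand side exceeds $e$ for $y>3/2$ (it is increasing in $y$ and already larger than $11000$ at $y=3/2$, since $e^{3\pi}>12000$), so $\maks\{e,|j(\tau)|\}=|j(\tau)|$. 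It therefore remains to check $\tfrac32\log|j(\tau)|\geq y$; since $\log$ is increasing it suffices that $\tfrac32\log(e^{2\pi y}-1193)\geq y$, i.e. $e^{2\pi y}-1193\geq e^{2y/3}$. The function $y\mapsto e^{2\pi y}-e^{2y/3}-1193$ has positive derivative on $[3/2,\infty)$ and is positive at $y=3/2$, hence positive throughout, which finishes the proof.

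The argument is entirely elementary; the only non-trivial ingredient is the non-negativity of the Fourier coefficients of $j$, which is exactly what justifies pulling absolute values inside the sums in (i) and (iii) and the termwise comparison bounding $g$. Beyond that, the care needed is numerical: keeping the explicit constants $e^{2\pi}\approx 535.49$, $j(i)=1728$, and $e^{3\pi}\approx 12391.6$ straight, and verifying the one-line monotonicity estimate in (iii).
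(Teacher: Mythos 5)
The paper does not prove this lemma itself; it cites it directly from Faisant and Philibert (\cite{fp}, \S 2 Lemme 1), so there is no internal argument to compare against. Your proof is correct and self-contained: it rests only on the $q$-expansion $j=q^{-1}+744+\sum_{n\geq 1}c_n q^n$ with $c_n\in\Z_{\geq 0}$ and the value $j(i)=1728$. The key numerical facts check out: $g(1)=j(i)-e^{2\pi}-744=984-e^{2\pi}\approx 448.51<449$, so $744+g(y)<1193$ for $y\geq 1$, giving (ii); the triangle inequality applied termwise with $c_n\geq 0$ gives (i) immediately, since $q(i\im\tau)=|q(\tau)|$; and in (iii) the case split at $y=3/2$ works because for $y>3/2$ the reverse triangle inequality gives $|j(\tau)|>e^{2\pi y}-1193$, which both exceeds $e$ (as $e^{3\pi}\approx 12391.6$) and exceeds $e^{2y/3}$ by the monotonicity argument you sketch. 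One small observation: your argument for (iii) does not actually use $\tau\in\ov{\mathcal{F}}$ anywhere (Case 1 handles all $y\leq 3/2$, and Case 2 only needs $y>3/2$), so you have proved the slightly stronger statement valid for all $\tau\in\uh$. This is almost certainly the same style of proof as in the cited source, since these bounds are standard consequences of the non-negativity of the Fourier coefficients of $j$.
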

Now we establish a bound for $|j(\tau)|$ in terms of $\im\tau$.

\begin{lemma}\label{l5.3}
For every $\tau\in\ov{\mathcal{F}}$ we have
\begin{equation*}
\log \maks \{ 1, | j(\tau)|\} \leq 2\pi\im\tau + \log 1193 < 2\pi\im\tau + 7.09.
\end{equation*}
\end{lemma}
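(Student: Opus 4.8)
The plan is to chain together the three parts of Lemma~\ref{l5.2}, splitting according to whether $\tau$ lies high or low in the fundamental domain. The starting observation is that every $\tau\in\ov{\mathcal F}$ satisfies $\im\tau\geq\sqrt 3/2$, because $(\im\tau)^2=|\tau|^2-(\re\tau)^2\geq 1-\tfrac14$.

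\emph{High part: $\im\tau\geq 1$.} Here Lemma~\ref{l5.2}(i) and (ii) combine to give $|j(\tau)|\leq j(i\im\tau)\leq e^{2\pi\im\tau}+1193$. I would then absorb the additive $1193$ into a multiplicative factor,
\[
e^{2\pi\im\tau}+1193=e^{2\pi\im\tau}\bigl(1+1193\,e^{-2\pi\im\tau}\bigr)\leq 1193\,e^{2\pi\im\tau},
\]
the last inequality needing only $1193\,e^{-2\pi\im\tau}\leq 1192$, which is clear since $e^{-2\pi}<10^{-2}$. As the right-hand side is $\geq 1$, taking logarithms gives $\log\maks\{1,|j(\tau)|\}\leq 2\pi\im\tau+\log 1193$ on this range.

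\emph{Low part: $\sqrt 3/2\leq\im\tau<1$.} Now Lemma~\ref{l5.2}(ii) does not apply to $y=\im\tau$ directly, so I would exploit the modular invariance of $j$ in the shape $j(iy)=j(i/y)$ (apply $\tau\mapsto-1/\tau$). Since $1<1/y\leq 2/\sqrt 3$, Lemma~\ref{l5.2}(i) followed by (ii) applied to $1/y$ yields
\[
|j(\tau)|\leq j(iy)=j(i/y)\leq e^{2\pi/y}+1193\leq e^{4\pi/\sqrt 3}+1193,
\]
a fixed constant that is easily seen to be below $e^{12}$. On the other hand $2\pi\im\tau+\log 1193\geq\pi\sqrt 3+\log 1193>12$, so the claimed inequality holds here as well, with plenty of room.

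Putting the two ranges together gives $\log\maks\{1,|j(\tau)|\}\leq 2\pi\im\tau+\log 1193$ for all $\tau\in\ov{\mathcal F}$, and the remaining numerical step is just $\log 1193<7.09$, i.e.\ $e^{7.09}>1193$, which is a direct check. The only point that requires any care is the behaviour near the corner $\rho=e^{2\pi i/3}$, where $\im\tau$ is smallest; there $j$ is actually small, and the reflection $y\mapsto 1/y$ is precisely what converts Lemma~\ref{l5.2}(ii) into a usable bound. I expect this case distinction --- rather than any single estimate --- to be the only real subtlety.
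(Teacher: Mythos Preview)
Your proof is correct and follows essentially the same route as the paper: split at $\im\tau=1$, use Lemma~\ref{l5.2}(i)--(ii) directly for $\im\tau\geq 1$, and combine them with the modular inversion $j(iy)=j(i/y)$ for $\tfrac{\sqrt{3}}{2}\leq\im\tau<1$. The only cosmetic differences are that the paper absorbs the additive $1193$ via the elementary inequality $\log(s+t)\leq\log s+\log t$ for $s,t\geq 2$ rather than factoring out $e^{2\pi\im\tau}$, and in the low range compares $\log(e^{4\pi/\sqrt{3}}+1193)$ directly to $\pi\sqrt{3}+\log 1193$ rather than passing through the intermediate value $12$.
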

\begin{proof}
First we show that 
\begin{equation*}
|j(\tau)|\leq e^{4\pi / \sqrt{3}}+1193<2609
\end{equation*}
for every $\tau\in\ov{\mathcal{F}}$ with $\im\tau\leq 1$.
Since $j$ is a modular function, we have $j(iy)=j(-\tfrac{1}{iy})=j(\tfrac{i}{y})$. It follows 
\begin{equation*}
|j(\tau)|\leq j(i\im\tau)=j\left(\tfrac{i}{\im\tau}\right)\leq e^{2\pi /\im\tau } +1193\leq e^{4\pi /\sqrt{3}}+1193
\end{equation*}
by Lemma \ref{l5.2} (i) and because of $\im\tau \geq \tfrac{\sqrt{3}}{2}$ for $\tau \in\ov{\mathcal{F}}$. Next observe that for $s,t\in \R$ with $2\leq s\leq t$ it holds that
\begin{equation*}
\log (s+t)\leq \log (2t)=\log 2+\log t\leq \log s + \log t,
\end{equation*}
so by Lemma \ref{l5.2} (i) and (ii) we have for $\im\tau\geq 1$
\begin{equation*}
\log |j(\tau)| \leq \log \left(e^{2\pi \im\tau} +1193\right)\leq 2\pi\im\tau +\log 1193.
\end{equation*}
Altogether this implies
\begin{equation*}
\log \maks \{ 1, | j(\tau)|\} \leq \begin{cases}
2\pi\im\tau + \log 1193 &\text{if $\im\tau\geq 1$}\\
\log (e^{4\pi / \sqrt{3}}+1193) &\text{if $\im\tau\leq 1$},
\end{cases}
\end{equation*}
for every $\tau\in\ov{\mathcal{F}}$. Since $\im\tau\geq\tfrac{\sqrt{3}}{2}$ and 
\begin{equation*}
\log (e^{4\pi / \sqrt{3}}+1193) < \pi\sqrt{3}+\log 1193
\end{equation*} 
we have in any case
\begin{equation*}
\log \maks \{ 1, | j(\tau)|\} \leq 2\pi\im\tau + \log 1193 < 2\pi\im\tau + 7.09.
\end{equation*}
\end{proof}

We also need the following inequality by Silverman.
\begin{lemma}[\cite{cs}, \S 2 Exercise 1]\label{l5.4}
For every $\tau\in\ov{\mathcal{F}}$ we have 
\begin{equation*}
\log| \de (\tau)| < -2\pi\im\tau + 22.16.
\end{equation*}
\end{lemma}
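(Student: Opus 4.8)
The plan is to work directly from the defining product expansion $\de(\tau)=(2\pi)^{12}q\prod_{n\geq 1}(1-q^n)^{24}$, where $q=q(\tau)=e^{2\pi i\tau}$, so that $|q|=e^{-2\pi\im\tau}$. Taking absolute values and logarithms gives
\[
\log|\de(\tau)| = 12\log(2\pi) - 2\pi\im\tau + 24\sum_{n\geq 1}\log|1-q^n|.
\]
Thus the term $-2\pi\im\tau$ is produced for free, and the whole statement reduces to showing that the constant $12\log(2\pi)+24\sum_{n\geq 1}\log|1-q^n|$ stays strictly below $22.16$ for every $\tau\in\ov{\mathcal{F}}$.

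To bound the remaining sum I would be deliberately wasteful. By the triangle inequality $|1-q^n|\leq 1+|q|^n$, and since $\log(1+x)\leq x$ for $x\geq 0$, one gets $\sum_{n\geq 1}\log|1-q^n|\leq \sum_{n\geq 1}|q|^n=|q|/(1-|q|)$. The only geometric input needed is that $\im\tau\geq\tfrac{\sqrt{3}}{2}$ on $\ov{\mathcal{F}}$, whence $|q|\leq e^{-\pi\sqrt{3}}$, a quantity of size roughly $4.3\cdot 10^{-3}$; since $x\mapsto x/(1-x)$ is increasing on $[0,1)$, this yields $\sum_{n\geq 1}\log|1-q^n|\leq e^{-\pi\sqrt{3}}/(1-e^{-\pi\sqrt{3}})$, which is about $4.35\cdot 10^{-3}$.

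Combining the two steps, $\log|\de(\tau)|\leq 12\log(2\pi)+24\cdot e^{-\pi\sqrt{3}}/(1-e^{-\pi\sqrt{3}})-2\pi\im\tau$, and since $12\log(2\pi)\approx 22.055$ with the correction contributing about $0.105$, the constant is $\approx 22.159<22.16$. I do not expect a genuine obstacle here: once the product has been logarithmed, the argument is essentially a one-line estimate. The one point deserving attention is the numerical bookkeeping, because the constant one obtains sits just below the stated $22.16$; however, the slack is comfortable relative to how crudely the product was estimated --- in particular, many of the factors $|1-q^n|$ are themselves less than $1$ and were simply replaced by $1+|q|^n$ --- so even a coarse evaluation of $e^{-\pi\sqrt{3}}$ closes the gap.
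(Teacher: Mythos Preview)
Your proposal is correct and is essentially identical to the paper's own proof: both take logarithms of the product expansion, bound $|1-q^n|\leq 1+|q|^n$, apply $\log(1+x)\leq x$, sum the resulting geometric series, and use $|q|\leq e^{-\pi\sqrt{3}}$ on $\ov{\mathcal{F}}$. The paper writes the resulting constant as $24\bigl(\tfrac{1}{1-e^{-\sqrt{3}\pi}}-1\bigr)+12\log(2\pi)$, which is algebraically the same as your $24\,e^{-\pi\sqrt{3}}/(1-e^{-\pi\sqrt{3}})+12\log(2\pi)$, and both evaluate to just under $22.16$.
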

\begin{proof}
Since $|q|\leq e^{-\sqrt{3}\pi}$ and $\log (1+x) \leq x$ for all $x\geq 0$, we have
\begin{equation*}
\begin{split}
\sum_{n\geq 1}\log | 1-q^n| &\leq \sum_{n\geq 1}\log (1+ | q| ^n)\leq \sum_{n\geq 1}\log (1+e^{-\sqrt{3}\pi n})\\
&\leq \sum_{n\geq 1}e^{-\sqrt{3}\pi n} = \tfrac{1}{1-e^{-\sqrt{3}\pi}}-1.
\end{split}
\end{equation*}
We deduce that 
\begin{align*}
\log| \de (\tau)| &= \log| q| +24\sum_{n\geq 1}\log | 1-q^n|+12\log(2\pi) \\
&\leq -2\pi\im\tau + 24\left(\tfrac{1}{1-e^{-\sqrt{3}\pi}}-1\right)+12\log(2\pi)\\
&< -2\pi\im\tau + 22.16.\qedhere
\end{align*}
\end{proof}

\begin{proof}[Proof of Proposition \ref{p5.4}]
First we estimate the non-archimedean part $h_0 (E/K)$. 
Note that for every prime ideal $\mathfrak{p}\subset\ok$ we have
\begin{equation*}
\order _{\p}(j_E)= \begin{cases}
\quad\order _{\p}(\mathfrak{A})        &\text{if $\order _{\p}(j_E)\geq 0$}\\
-\order _{\p}(\mathfrak{D})       &\text{if $\order _{\p}(j_E)\leq 0$}\end{cases},
\end{equation*}
where $(j_E)=\mathfrak{A}\mathfrak{D}^{-1}$ as above, and thus
\begin{equation}\label{e5.1}
\maks\{1, | j_E |_{\p} \} = p_\p ^{-\minn\{ 0, \order_{ \p}(j_E)\} / e_\p} = p_\p ^{ \order_{ \p}(\mathfrak{D}) / e_\p}, 
\end{equation}
where $p_\p$ is the unique prime number divided by $\p$ and $e_\p$ the ramification index of $\p$ in $K/\Q$. 
It follows that
\begin{equation}\label{e5.15}
\begin{split}
\sum_{\p\in M_K^ {0}}n_\p \log\maks \{1, | j_E |_{\p} \} &\overset{\eqref{e5.1}}{=}\log\left(\prod_{\p\in M_K^ {0}}p_{\p}^{n_\p \order_{\p}(\mathfrak{D}) / e_\p}\right)=\log |N_{K/\Q}(\mathfrak{D})|\\
&= \log | N_{K/\Q}(\mathfrak{D}\gamma_{E/K})|-\log |N_{K/\Q} (\gamma_{E/K})|\\
&= \log | N_{K/\Q}(\de_{E/K})|-\log |N_{K/\Q} (\gamma_{E/K})|\\
&=12[K:\Q] h_0 (E/K)-\log |N_{K/\Q} (\gamma_{E/K})|.
\end{split}
\end{equation}
Thus  $12 h_0 (E/K)$ is greater than or equal to the non-archimedean part of the modular height of $E/K$ with equality if and only if $E/K$ has everywhere semistable reduction. \\

Putting the inequalities from Lemma \ref{l5.3} and \ref{l5.4} together, we obtain
\begin{equation}\label{e5.35}
-\log|\de (\tau)| > \log \maks \{ 1, | j(\tau)|\} -29.25.
\end{equation}
Moreover, Lemma \ref{l5.2} (iii) yields
\begin{equation}\label{e5.36}
\begin{split}
\log\im\tau &\leq \log\log\maks \{ e, | j(\tau)|\} +\log \tfrac{3}{2}\\
& <\log\log\maks \{ e, | j(\tau)|\} + 0.41
\end{split}
\end{equation}
Now we can estimate the Faltings height. Setting $d:=[K:\Q]$, we obtain
\begin{equation}\label{e5.3}
\begin{split}
12d h(E/K) &= 12dh_0 (E/K) - \sum_{\s: K\hookrightarrow \C} \log(| \de(\tau_{\s})| \im (\tau_{\s})^6)+6d\log\pi\\
&\overset{\eqref{e5.15}}{\geq} \sum_{\p\in M_{K}^0} n_\p \log \maks \lbrace 1, | j_E |_{\p}\rbrace +\log |N_{K/\Q} (\gamma_{E/K})|\\
&\qquad - \sum_{\s: K\hookrightarrow \C}\log| \de(\tau_{\s})| 
- 6\sum_{\s: K\hookrightarrow \C}\log\im\tau_{\s} +6d\log\pi\\
&\overset{\substack{\eqref{e5.35}\\ \eqref{e5.36}}}{>} \sum_{\p\in M_{K}^0}  n_\p\log \maks \lbrace 1, | j_E|_{\p}\rbrace + \sum_{\s: K\hookrightarrow \C}\bigl(\log \max \lbrace 1, | j_E|_{\s}\rbrace -29.25\bigr)  \\
&\qquad-6 \sum_{\s: K\hookrightarrow \C}\bigl(\log\log\maks \lbrace e, | j_E|_\s\rbrace +0.41\bigr)\\
&\qquad + 6d\log\pi +\log |N_{K/\Q} (\gamma_{E/K})|\\
&>d(h(j_E) -24.85) -6 \sum_{\s: K\hookrightarrow \C}\log\log\maks \lbrace e, | j_E|_\s\rbrace \\
&\qquad +\log |N_{K/\Q} (\gamma_{E/K})|.
\end{split}
\end{equation}
The geometric-arithmetic mean inequality yields
\begin{equation}\label{e5.4}
\begin{split}
\sum_{\s: K\hookrightarrow \C}\log\log\maks \lbrace e, | j_E|_\s \rbrace
&= \log\left( \prod_{\s: K\hookrightarrow \C}\log\maks \lbrace e, | j_E|_\s\rbrace\right)\\
&\leq \log \left(\tfrac{1}{d}\sum_{\s: K\hookrightarrow \C}\log\maks \lbrace e, | j_E|_\s\rbrace \right)^d\\
&\leq d\log\left(1+\tfrac{1}{d} \sum_{\s: K\hookrightarrow \C}\log\maks \lbrace 1, | j_E|_\s\rbrace\right)\\
&\leq d\log (1+h(j_E)).
\end{split}
\end{equation}
Now Proposition \ref{p5.4} follows by plugging (\ref{e5.4}) into (\ref{e5.3}).
\end{proof}

\section{Proof of Theorem \ref{gap}}

Now we want to prove Theorem \ref{gap}. Assume throughout this section that $j_E\neq 0$. Let $\varrho =\tfrac{-1+\sqrt{3}i}{2}$ denote the third root of unity in $\uh$. For $\tau\in\uh$ we define
\begin{equation*}
V(\tau) := -\tfrac{1}{12}\log(|\de(\tau)|\im(\tau)^6),
\end{equation*}
so that 
\begin{equation*}
h_{\infty}(E/K)=\frac{1}{[K:\Q]}\sum_{\s: K\hookrightarrow \C} V(\tau_{\s}).
\end{equation*}
Also note that $V(\varrho)=h_{\text{min}}+\tfrac{1}{2}\log\pi$. Since $\de$ is a modular cusp form of weight $12$, the function $V$ is invariant under the  action of $\operatorname{SL}_2 (\Z)$ on $\uh$ and goes to infinity for $\im\tau\rightarrow \infty$. Therefore we may assume that all the $\{\tau_\s \}_{\s :K\hookrightarrow\C}$ lie in $\ov{\mathcal{F}}$.\\

The idea is that if $\tau\in\ov{\mathcal{F}}\backslash \{\varrho , -\varrho^2 \}$ is close to $\varrho$ or $-\varrho^2$, then $|j(\tau)|^{-1}$ becomes large. Consequently, $h(j(\tau)^{-1})=h(j(\tau))$ becomes large. If this happens for too many $\{\tau_\s \}_{\s :K\hookrightarrow\C}$, then $E/K$ has a large modular height. Because of Proposition \ref{p5.4} the curve $E/K$ cannot have a too small Faltings height. If on the other hand a certain part of the $\{\tau_\s \}_{\s :K\hookrightarrow\C}$ lies far from both $\varrho$ and $-\varrho^2$, then the corresponding $\{V(\tau_\s)\}_{\s :K\hookrightarrow\C}$ become too large for $E/K$ to have a small Faltings height. We will now make this idea rigorous and divide the proof into several lemmas.     

\begin{remark}\label{r6.1}
The function
\begin{equation*}
x\mapsto \tfrac{1}{12}x -\tfrac{1}{2}\log(1+x) 
\end{equation*}
is monotonically increasing for $x\geq 5$ and becomes greater than $1.323$ for $x\geq 37.84$.
Thus if $h(j_E)\geq 37.84$ then 
\begin{equation*}
h(E/K) > \tfrac{1}{12}h(j_E) -\tfrac{1}{2}\log(1+h(j_E)) - 2.071 \geq -0.748 > h_{\text{min}}+0.0007 
\end{equation*}
by Proposition \ref{p5.4}.
\end{remark}

\begin{lemma} \label{l6.2}
Let $P\in (0,1)$ and assume that $j_E \neq 0$ and that at least a fraction of $P$ of the conjugates $\{j_{\s}\}_{\s:K\hookrightarrow \C}$ of $j_E$ satisfy 
\begin{equation*}
|j_{\s}|\leq e^{ - 37.84 / P}:=\eps (P),
\end{equation*}
meaning that 
\begin{equation*}
|\{\s : K\hookrightarrow \C : |j_\s | \leq \eps (P)\}| \geq P\cdot [K:\Q].
\end{equation*} 
Then we have $h(j_E)\geq 37.84$ and therefore $h(E/K)\geq h_{\text{min}} + 0.0007$ by Remark \ref{r6.1}. 
\end{lemma}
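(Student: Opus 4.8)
The plan is to use the lower bound for the Weil height of $j_E$ coming from its conjugates, combined with Proposition~\ref{p5.4} and Remark~\ref{r6.1}. The key observation is that for an algebraic number $\alpha = j_E$, the archimedean contribution to $h(\alpha)$ is controlled from below purely by the embeddings $\s$ for which $|j_\s|$ is \emph{small} (i.e.\ $|j_\s| < 1$), via the reciprocal: since $h(\alpha) = h(\alpha^{-1})$, one has $h(j_E) = h(j_E^{-1}) \geq \frac{1}{[K:\Q]} \sum_{\s} \log\maks\{1, |j_\s|^{-1}\}$, where the sum is over archimedean embeddings. (Here we use that the non-archimedean terms are non-negative.)

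First I would write $d = [K:\Q]$ and let $S = \{\s : K\hookrightarrow\C : |j_\s| \leq \eps(P)\}$, so that $|S| \geq Pd$ by hypothesis. For each $\s \in S$ we have $|j_\s^{-1}| \geq \eps(P)^{-1} = e^{37.84/P}$, hence $\log\maks\{1, |j_\s^{-1}|\} \geq 37.84/P$. Discarding the (non-negative) contributions from all other places and embeddings in the height formula applied to $j_E^{-1}$, I get
\begin{equation*}
h(j_E) = h(j_E^{-1}) \geq \frac{1}{d}\sum_{\s \in S} \log\maks\{1, |j_\s^{-1}|\} \geq \frac{1}{d} \cdot Pd \cdot \frac{37.84}{P} = 37.84.
\end{equation*}
This gives $h(j_E) \geq 37.84$. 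Then Remark~\ref{r6.1} applies verbatim: since $h(j_E) \geq 37.84$, Proposition~\ref{p5.4} together with the monotonicity of $x \mapsto \frac{1}{12}x - \frac{1}{2}\log(1+x)$ and the non-negativity of $\frac{1}{12[K:\Q]}\log|N_{K/\Q}(\gamma_{E/K})|$ yields $h(E/K) > -0.748 > h_{\text{min}} + 0.0007$, which is the claim.

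The argument is essentially a two-line computation once the reciprocal trick is in place, so there is no real obstacle. The only point requiring a little care is making sure the definition of $\eps(P)$ interacts correctly with the fraction $P$: the exponent $37.84/P$ is chosen precisely so that each of the (at least) $Pd$ small conjugates contributes at least $37.84/P$ to the normalized sum, and $Pd \cdot \frac{37.84}{P} \cdot \frac{1}{d} = 37.84$ independently of $P$. One should also note explicitly that $\eps(P) < 1$ for all $P \in (0,1)$, so that $\log\maks\{1,|j_\s^{-1}|\} = \log|j_\s^{-1}| = -\log|j_\s| \geq 37.84/P$ is genuinely the right term to keep, and that we are free to assume (as in the setup of Section~4) that all $\tau_\s$ lie in $\ov{\mathcal{F}}$ — though in fact this lemma only uses the $j$-invariant and its conjugates, not the choice of $\tau_\s$, so no such assumption is even needed here.
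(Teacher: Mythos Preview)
Your proof is correct and essentially identical to the paper's: both use the identity $h(j_E)=h(j_E^{-1})$, bound $h(j_E^{-1})$ from below by the archimedean contribution from the embeddings with $|j_\s|\leq\eps(P)$, and then invoke Remark~\ref{r6.1}. The only cosmetic difference is that the paper writes the final lower bound as $-P\log\eps(P)=37.84$ rather than $\tfrac{1}{d}\cdot Pd\cdot \tfrac{37.84}{P}$.
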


For the next lemmas we define 
\begin{equation*}
B_{\delta}:=\{\tau\in\ov{\mathcal{F}}:\, |\tau - \varrho|\leq \delta\text{ or } |\tau +\varrho^2 |\leq \delta  \}.
\end{equation*}

\begin{lemma}\label{l6.3}
Let $0\leq \eps \leq 5.08\cdot 10^{-5}$ and $\delta = 0.027\cdot\sqrt[3]{\eps}$. Then for every $\tau\in B_{\delta}$ we have $| j(\tau)|\leq \eps$.
\end{lemma}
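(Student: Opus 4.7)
The plan is to exploit the fact that the $j$-function vanishes to order exactly three at $\varrho$ and at $-\varrho^2$. Since $j(\tau+1) = j(\tau)$ and $-\varrho^2 = \varrho + 1$, after the shift $\tau \mapsto \tau - 1$ the statement for points near $-\varrho^2$ reduces to the statement for points near $\varrho$; so it suffices to prove that $|\tau - \varrho| \leq \delta$ implies $|j(\tau)| \leq \eps$ for the given relation between $\delta$ and $\eps$.

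I would first confirm the triple zero: writing $j = 1728\, g_2^3/\Delta$, one has $\Delta(\varrho) \neq 0$ while $g_2(\varrho) = 0$ because $\varrho$ admits an extra order-$3$ automorphism and $g_2$ is a modular form of weight $4$ which transforms non-trivially under this automorphism. Hence $g_2^3$ has a zero of order $3$ at $\varrho$, and so does $j$ (exactly $3$, because $\varrho$ is an elliptic fixed point of order $3$ for $PSL_2(\Z)$ and $j$ is a Hauptmodul).

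Next I would produce an explicit constant $K$ with $|j(\tau)| \leq K|\tau - \varrho|^3$ on some fixed neighborhood of $\varrho$. Writing $\phi(\tau) := j(\tau)/(\tau - \varrho)^3$, the map $\phi$ is holomorphic in a neighborhood of $\varrho$, and the maximum-modulus principle applied on a disc $|\tau - \varrho| \leq R$ (with $R \in (0, \sqrt{3}/2)$ chosen so that the disc stays inside $\uh$) gives
\[
|j(\tau)| \;\leq\; |\tau - \varrho|^3 \cdot R^{-3}\,\max_{|s - \varrho| = R}|j(s)|
\]
for $|\tau - \varrho| \leq R$. The maximum on the right can be bounded by pulling each $s$ back into $\ov{\mathcal{F}}$ under $SL_2(\Z)$ and applying Lemma \ref{l5.2}; alternatively, one computes $K$ from the Taylor expansion directly, using the identity $j'''(\varrho) = 10368\, g_2'(\varrho)^3/\Delta(\varrho)$ (obtained by differentiating $1728\, g_2^3/\Delta$ three times and using $g_2(\varrho) = 0$) together with the Chowla--Selberg values of $g_2'(\varrho)$ and $g_3(\varrho)$, and then controlling the remainder for $|\tau - \varrho|$ small.

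Once such a $K$ is in hand the conclusion is immediate: with $\delta = 0.027\sqrt[3]{\eps}$ we get $|j(\tau)| \leq K\delta^3 = K\,(0.027)^3\eps$, which is $\leq \eps$ as soon as $K \leq (0.027)^{-3} \approx 5.08 \cdot 10^{4}$. The hypothesis $\eps \leq 5.08 \cdot 10^{-5}$ then forces $\delta$ to be at most about $10^{-3}$, keeping $\tau$ well inside the region on which the cubic estimate is valid, and it is precisely the matching between $0.027$ and $5.08\cdot 10^{-5}$ that tells us the author has tuned these constants so that $(0.027)^3 K = 1$ at the optimal value of $K$. The principal difficulty is of course pinning down that explicit numerical value of $K$: either the maximum-modulus step needs an explicit $R$ together with a careful bound on $|j|$ along the circle $|s - \varrho| = R$, or the Taylor route forces one to evaluate and estimate the transcendental quantities associated with $\varrho$.
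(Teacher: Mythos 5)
Your strategy is the right one, and it matches the paper's: the heart of the argument is an explicit cubic bound $|j(\tau)| \leq K\,|\tau - \varrho|^3$ on a small disc about $\varrho$, combined with the periodicity $j(\tau+1)=j(\tau)$ and $-\varrho^2 = \varrho + 1$ to handle the other elliptic point. Your bookkeeping of the constants is also correct: one needs $K \leq (0.027)^{-3} \approx 5.08\cdot 10^4$, and the hypothesis $\eps \leq 5.08 \cdot 10^{-5}$ is exactly what forces $\delta = 0.027\sqrt[3]{\eps} \leq 10^{-3}$, keeping $\tau$ inside the region where the cubic bound is valid.

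The gap is that you never actually produce the constant. You sketch two routes (maximum modulus on a circle of radius $R$ about $\varrho$, or a Taylor estimate with remainder via $j'''(\varrho)$ and Chowla--Selberg data) but explicitly concede that neither is carried out, and until one is made quantitative there is no proof. The paper avoids the computation entirely by citing an existing explicit result: Bilu, Luca and Pizarro-Madariaga (\cite{bil}, Proposition 2.2) prove $|j(\tau)| \leq 47000\,|\tau - \varrho|^3$ for all $\tau \in \ov{\mathcal{F}}$ with $|\tau - \varrho| \leq 0.001$. Since $47000 \cdot (0.027)^3 \approx 0.925 < 1$ and $\delta \leq 0.001$ under the stated hypothesis, the lemma follows at once. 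Incidentally, your inference that the author tuned $0.027$ so that $(0.027)^3 K = 1$ is not quite right; with $K = 47000$ the product is about $0.925$, reflecting a small safety margin below the threshold $(0.027)^{-3} \approx 5.08\cdot 10^4$.
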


\begin{lemma}\label{l6.4}
Let $\delta \in (0, \tfrac{1}{2})$, $q:=-q(\varrho)=e^{-\pi\sqrt{3}}$ and define 
\begin{equation*}
\delta ' := \tfrac{1}{2}\delta\sqrt{1-\tfrac{\delta ^2}{4}} -\tfrac{\sqrt{3}}{4}\delta ^2  
\end{equation*}
and
\begin{equation*}
\begin{split}
C(\delta ')&:= \frac{\pi}{6}\delta' -\frac{1}{2}\log \left(1+\frac{2}{\sqrt{3}}\delta'\right) +\frac{2q(1-e^{-2\pi\delta'})}{1+q}+\frac{2q^3(1-e^{-6\pi\delta'})}{1+q^3} \\
&\qquad +\frac{2}{(1-q^2)(1-q^2 e^{-4\pi\delta'})}- \frac{2}{(1-q^2)^2}. \\
\end{split}
\end{equation*} 
Then for every $\tau\in\ov{\mathcal{F}}\backslash B_{\delta}$ we have
\begin{equation*}
V(\tau) \geq V(\varrho) +C(\delta').
\end{equation*}
\end{lemma}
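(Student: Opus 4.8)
The plan is to reduce the inequality to an explicit one‑variable computation near the corner $\varrho$. The first ingredient is purely geometric: every $\tau\in\ov{\mathcal{F}}\setminus B_{\delta}$ satisfies $\im\tau\ge\im\varrho+\delta'=\tfrac{\sqrt3}{2}+\delta'$. Since $\im\tau$ is harmonic, on $\ov{\mathcal{F}}\cap\{\im\tau\le Y\}$ with the two $\delta$‑balls removed its minimum is attained on the boundary; on the lines $\re\tau=\pm\tfrac12$ and on the two circular arcs bounding $B_{\delta}$ one checks the lowest point lies on the unit circle, and for $\tau=e^{i(2\pi/3-\varphi)}$ at chord distance $\delta$ from $\varrho=e^{2\pi i/3}$ (so $2\sin(\varphi/2)=\delta$) one has $\cos\varphi=1-\tfrac{\delta^2}{2}$ and $\sin\varphi=\delta\sqrt{1-\tfrac{\delta^2}{4}}$, whence $\im\tau-\im\varrho=\tfrac12\delta\sqrt{1-\tfrac{\delta^2}{4}}-\tfrac{\sqrt3}{4}\delta^2=\delta'$.

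Next I would make $V$ explicit via $\de(\tau)=(2\pi)^{12}q\prod_{n\ge1}(1-q^n)^{24}$, $q=e^{2\pi i\tau}$, giving
\[
V(\tau)=-\log(2\pi)+\tfrac{\pi}{6}\im\tau-\tfrac12\log\im\tau-2\sum_{n\ge1}\log|1-q(\tau)^n| ,
\]
and observe that $\partial V/\partial\tau$ is a nonzero constant times the non‑holomorphic Eisenstein series $E_2^{*}$ of weight $2$; hence the critical points of $V$ in $\ov{\mathcal{F}}$ are exactly the zeros of $E_2^{*}$, which by Masser's theorem \cite{mas} are only $\varrho$ and $i$. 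As $V(i)$ is a fixed number well above $V(\varrho)+C(\delta')$ and $V\to\infty$ at the cusp, the infimum of $V$ on $\ov{\mathcal{F}}\setminus B_{\delta}$ is attained on the boundary, and by $\operatorname{SL}_2(\Z)$‑invariance it is enough to bound $V$ along the edge $\re\tau=-\tfrac12$ with $\im\tau\ge\tfrac{\sqrt3}{2}+\delta'$ and along the unit arc from $\varrho$ towards $i$ at chord distance $\ge\delta$ from $\varrho$; on the arc the factors $|1-q(\tau)^n|$ differ from their values at $\varrho$ only by corrections of order $\delta^2q^2$, harmless since $C(\delta')$ has size $\delta'^{2}$, so I focus on the vertical edge.

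On $\re\tau=-\tfrac12$ every $q(\tau)^n$ is real, with $|1-q(\tau)^n|=1+e^{-2\pi ny}$ for odd $n$ and $1-e^{-2\pi ny}$ for even $n$, exactly as at $\varrho$ but with $\im\varrho$ replaced by $y=\im\tau$. Then $\partial_yV(-\tfrac12+iy)$ is a positive multiple of $E_2^{*}(-\tfrac12+iy)$, which Masser's theorem keeps positive for $y>\tfrac{\sqrt3}{2}$, so $V(\tau)-V(\varrho)\ge\int_{\sqrt3/2}^{\sqrt3/2+\delta'}\partial_yV(-\tfrac12+it)\,dt$. The $\tfrac{\pi}{6}\im\tau-\tfrac12\log\im\tau$ part of the integrand contributes exactly $\tfrac{\pi}{6}\delta'-\tfrac12\log(1+\tfrac{2}{\sqrt3}\delta')$; the odd‑index part contributes $2\sum_{n\ \mathrm{odd}}\bigl(\log(1+q^n)-\log(1+q^ne^{-2\pi n\delta'})\bigr)$, which by $\log(1+a)-\log(1+b)\ge\tfrac{a-b}{1+a}$ and discarding $n\ge5$ is $\ge\tfrac{2q(1-e^{-2\pi\delta'})}{1+q}+\tfrac{2q^3(1-e^{-6\pi\delta'})}{1+q^3}$; the even‑index part contributes $2\sum_{m\ge1}\bigl(\log(1-q^{2m})-\log(1-q^{2m}e^{-4\pi m\delta'})\bigr)$, which by $\log(1-A)-\log(1-B)\ge-\tfrac{A-B}{1-A}$ for $0\le B\le A<1$, the estimate $1-q^{2m}\ge1-q^2$, and summing a geometric series equals $\tfrac{2}{(1-q^2)(1-q^2e^{-4\pi\delta'})}-\tfrac{2}{(1-q^2)^2}$. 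Adding the three contributions yields $V(\tau)\ge V(\varrho)+C(\delta')$, and the remaining $\tau$ with $\im\tau$ bounded away from $\tfrac{\sqrt3}{2}$ are handled directly, the same $q$‑expansion making $V(\tau)-V(\varrho)$ uniformly large there.

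The main obstacle is that $C(\delta')$ is nearly sharp: because $\partial V/\partial\tau$ vanishes at $\varrho$, its leading coefficient in $\delta'$ all but cancels, so no estimate above may be lossy to order $\delta'$ or even to order $q^2$. In particular the crude bound $|1-q(\tau)^{2m}|\le1+|q(\tau)|^{2m}$ applied to the even‑index factors at a generic point of $\ov{\mathcal{F}}$ overshoots by an amount of order $q^2$ that by itself already dominates $C(\delta')$ for small $\delta'$; this is why one genuinely has to localize at the corners first, where $|\tau|\ge1$ forces $\re\tau$ close to $\pm\tfrac12$ and so keeps $|1-q(\tau)^{2m}|$ close to $1-|q(\tau)|^{2m}$. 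Carrying out that localization — the critical‑point count of $V$ from Masser's theorem together with the monotonicity of $V$ along the boundary of $\ov{\mathcal{F}}$ — is where the bulk of the work lies.
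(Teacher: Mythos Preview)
Your geometric computation of $\delta'$ and your explicit estimate $V(\varrho+i\delta')-V(\varrho)\ge C(\delta')$ are correct and coincide term for term with the paper's; your concavity bounds $\log(1+a)-\log(1+b)\ge(a-b)/(1+a)$ and $\log(1-A)-\log(1-B)\ge-(A-B)/(1-A)$ are exactly the inequality $\log x\le x-1$ applied to each factor of the product, which is what the paper does. You also correctly invoke Masser for the monotonicity of $y\mapsto V(-\tfrac12+iy)$.

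The gap is in reducing an arbitrary $\tau\in\ov{\mathcal{F}}\setminus B_\delta$ to the point $\varrho+i\delta'$. Your critical-point-and-boundary argument never treats the circular arcs $\partial B_\delta\cap\ov{\mathcal{F}}$, which are genuine boundary components of the punctured region, and your handling of the unit arc (``corrections of order $\delta^2q^2$, harmless since $C(\delta')$ has size $\delta'^2$'') is not a proof---at best it would yield $V\ge V(\varrho)+C(\delta')-O(\delta^2q^2)$ rather than the stated inequality with the exact constant $C(\delta')$. The paper bypasses all of this with a single horizontal minimization (its Lemma~4.5): for \emph{every} $\tau=x+iy\in\ov{\mathcal{F}}$ one has $V(x,y)\ge V(-\tfrac12,y)$. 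This follows immediately from the product formula
\[
|\de(x,y)|=(2\pi)^{12}|q|\prod_{n\ge1}\bigl(1-2\re(q^n)+|q|^{2n}\bigr)^{12},
\]
in which only $\re(q^n)$ depends on $x$; Masser pins the $x$-critical lines of $V$ to $\tfrac12\Z$, and by inspection the product is largest (hence $V$ smallest) when $q<0$, i.e.\ at $x\in\tfrac12+\Z$. Combined with your bound $\im\tau\ge\tfrac{\sqrt3}{2}+\delta'$ and the monotonicity in $y$, this gives $V(\tau)\ge V(\varrho+i\delta')$ for all $\tau\in\ov{\mathcal{F}}\setminus B_\delta$ in one stroke, with no case analysis on the boundary of the punctured domain, no separate treatment of the unit arc, and no need to enumerate the interior critical points of $V$.
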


First we prove that Theorem \ref{gap} follows from Lemma \ref{l6.2}-\ref{l6.4}:
\begin{proof}[Proof of Theorem \ref{gap}]
By Lemma \ref{l6.2} we either have $h(E/K)\geq h_{\text{min}} +0.0007$ or at most a fraction of $P$ of the $\{j_{\s}\}_{\s:K\hookrightarrow \C}$ satisfy $|j_{\s}|\leq \eps (P)$. Let 
\begin{equation*}
\delta (P):= 0.027\cdot \sqrt[3]{\eps (P)}. 
\end{equation*}
Note that $\eps (P)\leq e^{-37.84}< 5.08\cdot 10^{-5}$ and $\delta (P) <\tfrac{1}{2}$ for every $P\in (0,1)$. It follows from Lemma \ref{l6.3} that at most a fraction of $P$ of the $\{\tau_{\s}\}_{\s:K\hookrightarrow \C}$ satisfy $\tau_\s \in B_{\delta(P)}$. In this case Lemma \ref{l6.4} implies
\begin{align*}
h(E/K) &\geq h_{\infty}(E/K) +\frac{1}{2}\log\pi= \frac{1}{[K:\Q]}\sum_{\s: K\hookrightarrow \C} V(\tau_{\s})  +\frac{1}{2}\log\pi\\
&= \frac{1}{[K:\Q]}\left(\sum_{\substack{\s: K\hookrightarrow \C \\ \tau_\s \in B_{\delta (P)}}} V(\tau_{\s})+\sum_{\substack{\s: K\hookrightarrow \C \\ \tau_\s \notin B_{\delta (P)}}} V(\tau_{\s})\right) +\frac{1}{2}\log\pi\\
&\geq \frac{1}{[K:\Q]}\left(\sum_{\substack{\s :\, K\hookrightarrow \C \\ \tau_\s \in B_{\delta (P)}}} h_{\text{min}}+\sum_{\substack{\s :\, K\hookrightarrow \C \\ \tau_\s \notin B_{\delta (P)}}} (h_{\text{min}}+C(\delta' (P)))\right)\\
&\geq h_{\text{min}} + (1-P) C(\delta' (P)),
\end{align*}
where
\begin{equation*}
\delta' (P):= \tfrac{1}{2}\delta(P)\sqrt{1-\tfrac{\delta (P)^2}{4}}-\tfrac{\sqrt{3}}{4}\delta (P)^2  
\end{equation*}
and $C(\delta')$ as in Lemma \ref{l6.4}. As $P$ was arbitrary, we obtain a gap of 
\begin{equation*}
\minn \{  0.0007 ,\, \maks_{P\in (0,1)}(1-P) C(\delta' (P)) \}.
\end{equation*}
We will see that $\maks_{P\in (0,1)}(1-P) C(\delta' (P))$ 
is much smaller than $0.0007$, so the gap function is given by
\begin{align*}
P \longmapsto &(1-P) C(\delta' (P))\\
=&(1-P)\Bigl( \frac{\pi}{6}\delta'(P) -\frac{1}{2}\log \left(1+\frac{2}{\sqrt{3}}\delta'(P)\right) +\frac{2q(1-e^{-2\pi\delta'(P)})}{1+q}\\
&\quad +\frac{2q^3(1-e^{-6\pi\delta'(P)})}{1+q^3}+\frac{2}{(1-q^2)(1-q^2 e^{-4\pi\delta'(P)})}- \frac{2}{(1-q^2)^2}\Bigr).
\end{align*}
Numerical computations at high precision in SAGE \cite{sage} suggest that the function has a maximum at $0.964\dots$. So we choose $P=0.964$ and obtain a gap of $4.601 \cdot 10^{-18}$.
\end{proof}

Now we prove Lemma \ref{l6.2}--\ref{l6.4}.
\begin{proof}[Proof of Lemma \ref{l6.2}]
First note that $\eps (P)=e^{-37.84 / P}<1$ for $P\in (0,1)$. Assuming that at least a fraction of $P$ of the $\{j_{\s}\}_{\s:K\hookrightarrow \C}$ satisfy 
$|j_{\s}|\leq \eps(P)$ and that $j_E \neq 0$ we obtain 
\begin{equation*}
\begin{split}
h(j_E ^{-1})&\geq \frac{1}{[K:\Q]}\sum _{\s: K\hookrightarrow \C}\log\maks\{ 1,|j_{\s}|^{-1}\}\\
&\geq \frac{1}{[K:\Q]}\sum _{\substack{\s: K\hookrightarrow \C \\ |j_{\s}|\leq \eps}}\log\maks\{ 1,|j_{\s}|^{-1}\} \\
&=\frac{1}{[K:\Q]}\sum _{\substack{\s :\, K\hookrightarrow \C \\ |j_{\s}|\leq \eps}}-\log |j_{\s}|  \geq -P\log\eps(P) =37.84.
\end{split}
\end{equation*}
It is well-known that $h(\alpha ^{-1})=h(\alpha)$ for every $\alpha\in\ov{\Q}\setminus \{0\}$, which proves Lemma \ref{l6.2}.
\end{proof}

\begin{proof}[Proof of Lemma \ref{l6.3}]
Bilu, Luca and Pizarro-Madariaga showed that
\begin{equation*}
|j(\tau)|\leq 47000\cdot |\tau -\varrho|^3
\end{equation*}
for all $\tau\in\ov{\mathcal{F}}$ with $|\tau -\varrho|\leq 0.001$ (see \cite{bil}, Proposition 2.2). Thus if $\eps\leq 5.08\cdot 10^{-5}$ and $\delta=0.027\cdot \sqrt[3]{\eps}$, then $\delta \leq 0.001$ and $|j(\tau)|\leq \eps$ for all $\tau\in\ov{\mathcal{F}}$ with $|\tau -\varrho|\leq \delta$. The same statement holds for $-\varrho^2$ instead of $\varrho$.
\end{proof}

To prove Lemma \ref{l6.4}, we first show that we can restrict ourselves to the half-line $\re\tau=-\tfrac{1}{2}$:
\begin{lemma}\label{l6.5}
For all $\tau = x+iy\in\ov{\mathcal{F}}$ we have
\begin{equation*}
V(\tau)\geq V(-\tfrac{1}{2}+iy)
\end{equation*}
and $V$ is monotonically increasing on the half line defined by $x=-\tfrac{1}{2}$ and $y\geq\tfrac{\sqrt{3}}{2}$.
\end{lemma}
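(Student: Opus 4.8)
\textbf{Plan for the proof of Lemma \ref{l6.5}.}
The plan is to compute the partial derivatives of $V(\tau)=-\tfrac{1}{12}\log(|\de(\tau)|\,\im(\tau)^6)$ directly from the $q$-product expansion and to extract enough sign information to get both assertions. Writing $\tau=x+iy$ and $q=e^{2\pi i\tau}$, we have
\begin{equation*}
\log|\de(\tau)| = 12\log(2\pi) - 2\pi y + 24\sum_{n\geq 1}\log|1-q^n|,
\end{equation*}
so that
\begin{equation*}
12\,V(\tau) = -12\log(2\pi) + 2\pi y - 6\log y - 24\sum_{n\geq 1}\log|1-q^n|.
\end{equation*}
Since $|1-q^n|^2 = 1 - 2e^{-2\pi n y}\cos(2\pi n x) + e^{-4\pi n y}$, each summand depends on $x$ only through $\cos(2\pi n x)$, and I will compute
\begin{equation*}
\frac{\partial}{\partial x}\log|1-q^n|^2 = \frac{4\pi n\, e^{-2\pi n y}\sin(2\pi n x)}{1 - 2e^{-2\pi n y}\cos(2\pi n x) + e^{-4\pi n y}}.
\end{equation*}

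\textbf{First assertion (the inequality $V(\tau)\ge V(-\tfrac12+iy)$).}
I would fix $y\geq\tfrac{\sqrt3}{2}$ and show that on the segment $x\in[-\tfrac12,\tfrac12]\cap\ov{\mathcal F}$ the function $x\mapsto V(x+iy)$ attains its minimum at the endpoints $x=\pm\tfrac12$. By the $x\mapsto -x$ symmetry of $V$ it suffices to work on $[0,\tfrac12]$ and show $V$ is decreasing there, i.e.\ that $\sum_{n\geq1}\frac{\partial}{\partial x}\log|1-q^n|^2\ge 0$ for $x\in(0,\tfrac12)$. The $n=1$ term has numerator proportional to $\sin(2\pi x)>0$ on $(0,\tfrac12)$ and a denominator that is manifestly positive, so it is nonnegative; the tail terms ($n\ge2$) can change sign, but they are exponentially small since $e^{-2\pi n y}\le e^{-\pi\sqrt3\,n}$, and I will bound the whole tail in absolute value by a convergent geometric-type series and check that it is dominated by the $n=1$ term on the relevant range of $x$. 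The only delicate region is $x$ near $0$, where the $n=1$ term also degenerates; there I would expand near $x=0$ and compare leading orders (both the $n=1$ term and the $n=2$ term vanish linearly in $x$, with the $n=1$ coefficient winning by a wide margin because of the $e^{-2\pi y}$ versus $e^{-4\pi y}$ gap). This endpoint-minimum claim, together with the observation that the part of $\ov{\mathcal F}$ with a given $y<1$ does not include $x=\pm\tfrac12$ only when $y<\tfrac{\sqrt3}{2}$, which is excluded, yields $V(\tau)\ge V(-\tfrac12+iy)$ for all $\tau\in\ov{\mathcal F}$.

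\textbf{Second assertion (monotonicity on $x=-\tfrac12$).}
On the vertical line $x=-\tfrac12$ we have $\cos(2\pi n x)=\cos(\pi n)=(-1)^n$, hence $q^n$ is real with $|1-q^n| = 1 - (-1)^n e^{-2\pi n y} = 1+(-1)^{n+1}e^{-2\pi n y}$, and
\begin{equation*}
12\,V(-\tfrac12+iy) = -12\log(2\pi) + 2\pi y - 6\log y - 24\sum_{n\geq1}\log\!\left(1+(-1)^{n+1}e^{-2\pi n y}\right).
\end{equation*}
Differentiating in $y$,
\begin{equation*}
12\,\frac{d}{dy}V(-\tfrac12+iy) = 2\pi - \frac{6}{y} + 48\pi\sum_{n\geq1}\frac{(-1)^{n+1} n\, e^{-2\pi n y}}{1+(-1)^{n+1}e^{-2\pi n y}}.
\end{equation*}
I would show this is positive for all $y\ge\tfrac{\sqrt3}{2}$: the term $2\pi-6/y$ is increasing in $y$ and already positive once $y>3/\pi\approx0.9549$, so I only need to handle $y\in[\tfrac{\sqrt3}{2},3/\pi]$ together with the sign of the series. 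The series is an alternating-type sum; grouping consecutive terms ($n=2m-1$ with $n=2m$) shows each pair is positive since the odd-index ($+$) contribution dominates the even-index ($-$) one (larger coefficient $2m-1<2m$ is outweighed by the larger exponential $e^{-2\pi(2m-1)y}>e^{-2\pi(2m)y}$, and the denominators cooperate), so in fact the whole series is $\ge 0$; alternatively one bounds it crudely since $e^{-2\pi n y}\le e^{-\pi\sqrt3 n}$ is tiny. Then $2\pi-6/y$ plus a nonnegative quantity is checked to be positive on the short interval $[\tfrac{\sqrt3}{2},3/\pi]$ by a direct numerical estimate (at $y=\tfrac{\sqrt3}{2}$ one has $2\pi-6/y = 2\pi - 12/\sqrt3 \approx -0.646$, so here the positive series contribution is genuinely needed and I will keep enough terms to beat it).

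\textbf{Main obstacle.}
The routine part is the differentiation and the exponential tail bounds; the real work is the small-$y$ corner of the second assertion, where $2\pi - 6/y<0$ near $y=\tfrac{\sqrt3}{2}$, so the positivity of $\frac{d}{dy}V(-\tfrac12+iy)$ there must come from the Eisenstein/theta series term, and one has to retain and estimate the first few terms carefully (this is essentially the statement that $V$ has its minimum at $\varrho=-\tfrac12+\tfrac{\sqrt3}{2}i$, consistent with $V(\varrho)=h_{\text{min}}+\tfrac12\log\pi$). Equivalently, one may recognize $\frac{d}{dy}V(-\tfrac12+iy)$ up to a positive factor as $-\im\big(\widehat E_2(\tau)\big)$ or a related non-holomorphic Eisenstein series of weight $2$ along this line and invoke its known sign/vanishing behaviour (Masser's result cited in the introduction); I would use whichever is cleaner, but either way the crux is controlling that weight-$2$ series near the corner point $\varrho$.
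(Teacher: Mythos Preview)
Your overall architecture is right, and for the \emph{first assertion} your brute-force domination of the $n=1$ term over the tail is workable (with $r=e^{-2\pi y}\le e^{-\pi\sqrt3}\approx 0.0043$ the estimates go through). The paper, however, takes a slicker route: it first observes that $\partial_x V - i\,\partial_y V = -\tfrac{\pi i}{6}E_2$, so by Masser's lemma the critical points of $x\mapsto V(x+iy)$ lie exactly at $x\in\tfrac12\Z$; then, writing $|\de(x,y)|=(2\pi)^{12}|q|\prod(1-2\re(q^n)+|q|^{2n})^{12}$, it notes that each factor is simultaneously minimised when $q>0$, i.e.\ at $x\in\Z$, so these are maxima of $V$ and the only remaining critical point $x\in\tfrac12+\Z$ must be the minimum. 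This avoids the term-by-term sign analysis entirely.

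For the \emph{second assertion} your primary plan has a genuine gap. At $y=\tfrac{\sqrt3}{2}$ the derivative $\tfrac{d}{dy}V(-\tfrac12+iy)$ is \emph{exactly} zero (this is where the global minimum $V(\varrho)$ sits), so the deficit $2\pi-6/y\approx-0.645$ is cancelled \emph{exactly} by the series, not strictly beaten. Hence ``keep enough terms to beat it'' cannot succeed: any finite truncation gives at best equality at the endpoint and no strict positivity for $y$ slightly above $\tfrac{\sqrt3}{2}$. To rescue the direct approach you would need a second-order argument (positivity of $\tfrac{d^2}{dy^2}V$ near $\varrho$), which you do not propose.

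You do name the correct fix in your last paragraph, and that is precisely the paper's proof: on the line $x=-\tfrac12$ one has $\partial_y V=\tfrac{\pi}{6}E_2$ (real there), Masser's result gives that the \emph{only} zero of $E_2$ on this half-line is $\varrho$, and then continuity of $\partial_y V$ together with $V\to\infty$ as $y\to\infty$ forces $\partial_y V>0$ for $y>\tfrac{\sqrt3}{2}$. So your ``alternative'' via Masser is not optional here; it is the argument.
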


\begin{proof}
We regard $V$ as a function of the real variables $x,y$ and examine the partial derivatives. Let 
\begin{equation*}
E_2(x,y):=1-24\sum_{n\geq 1}\frac{nq^n}{1-q^n}-\frac{3}{\pi y}
\end{equation*}
denote the non-holomorphic Eisenstein series of weight $2$. Direct calculation shows that
\begin{equation*}
\partial _x V(x,y)-i\partial _y V(x,y)=-\tfrac{\pi i}{6}E_2(x,y).
\end{equation*}
According to Masser's proof of Lemma 3.2 in \cite{mas}, we have $E_2 (x,y)\in\R$ and hence $\partial _x V(x,y)=0$ if and only if $x\in \tfrac{1}{2}\Z$. In the expression
\begin{equation*}
|\de (x,y)|= (2\pi)^{12}|q|\prod_{n\geq 1}|1-q^n|^{24}=(2\pi)^{12}|q| \prod_{n\geq 1}(1-2\re (q^n)+|q|^{2n})^{12}
\end{equation*}
only the $\re (q^n)$-term depends on $x$. For fixed $y>0$ the product is minimal if $q>0$, i.e.~$x\in\Z$. Therefore it has to be maximal for $x\in \tfrac{1}{2}+\Z$.
Hence $V(x,y) = -\tfrac{1}{12}\log(|\de(x,y)|y^6)$ is minimal for $x\in\tfrac{1}{2}+\Z$ if $y$ is fixed. Furthermore, Masser's proof shows that the only zero of $E_2$ on the half-line $x=-\tfrac{1}{2}$ in $\uh$ is $\varrho$. Since $\partial_y V$ is continuous and $\lim_{y\rightarrow\infty}V(x,y)=\infty$, it follows that $\partial_y V(-\tfrac{1}{2},y) > 0$ for $y>\tfrac{\sqrt{3}}{2}$, hence $V$ is monotonically increasing on this half-line.
\end{proof}    

\begin{proof}[Proof of Lemma \ref{l6.4}]
For $\delta\in (0,\tfrac{1}{2})$, the upper intersection point $w$ of the circle $|\tau -\varrho|=\delta$ and the unit circle has imaginary part
\begin{equation*}
\im w = \tfrac{\sqrt{3}}{2}  + \tfrac{1}{2}\delta\sqrt{1-\tfrac{\delta ^2}{4}}-\tfrac{\sqrt{3}}{4}\delta ^2 = \im\varrho + \delta'.
\end{equation*}
Since this is the smallest possible imaginary part on the two arcs constituting $\partial B_{\delta}\cap\ov{\mathcal{F}}$, we have $\im\tau\geq \im w = \im\varrho + \delta'$ for every $\tau\in\ov{\mathcal{F}}\backslash B_{\delta}$. By Lemma \ref{l6.5} we have
\begin{equation*}
V(\tau)\geq V(-\tfrac{1}{2}+i\im\tau)\geq V(\varrho + i\delta ')
\end{equation*}
for every $\tau\in\ov{\mathcal{F}}\backslash B_{\delta}$, so it suffices to show the estimate for $\varrho + i\delta' \in -\tfrac{1}{2}+i\R$ . For the rest of the proof we set $q:=-q(\varrho)=e^{-\pi\sqrt{3}}$ and $t:=e^{-2\pi\delta'}$ so that $q(\varrho + i\delta')=-qt$. First we estimate the difference of the infinite product of $\de (\tau)$.
\begin{align*}
\log\prod_{n\geq 1}&|1-(-qt)^n|-\log\prod_{n\geq 1}|1-(-q)^n|=\sum_{n\geq 1}\log\left|\frac{1-(-qt)^n}{1-(-q)^n}\right|\\
&\leq\sum_{n\geq 1}\left(\frac{1-(-qt)^n}{1-(-q)^n}-1\right)=\sum_{n\geq 1}\frac{(-q)^n(1-t^n)}{1-(-q)^n}\\
&\overset{(*)}{\leq} -\frac{q(1-t)}{1+q}-\frac{q^3(1-t^3)}{1+q^3} + \frac{1}{1-q^2} \left(\sum_{n\geq 1}q^{2n}(1-t^{2n})\right)\\
&=-\frac{q(1-t)}{1+q}-\frac{q^3(1-t^3)}{1+q^3} -\frac{1}{(1-q^2)(1-q^2 t^2)}+ \frac{1}{(1-q^2)^2}
\end{align*}
For $(*)$ we used that since $0<q,t<1$ we have
\begin{equation*}
\frac{(-q)^n(1-t^n)}{1-(-q)^n}<0 
\end{equation*}
for $n$ odd and $0< q^n < q^2$ for $n\geq 2$ even.\\

Hence we have for $\tau\in \ov{\mathcal{F}}\backslash B_{\delta}$:
\begin{align*}
V(\tau)-V(\varrho) &\geq V(\varrho +i\delta') - V(\varrho) = -\frac{1}{12}\log \left|\frac{\de (\varrho +i\delta')}{\de(\varrho)}\right| -\frac{1}{2}\log\left(1+\frac{2\delta'}{\sqrt{3}}\right)\\
&=\frac{\pi}{6}\delta' - 2\sum_{n\geq 1}\log\left\lvert\frac{1-e^{-2\pi n\delta'}(-q)^n}{1-(-q)^n}\right\rvert -\frac{1}{2}\log\left(1+\frac{2\delta'}{\sqrt{3}}\right)\\
&\geq \frac{\pi}{6}\delta' -\frac{1}{2}\log \left(1+\frac{2}{\sqrt{3}}\delta'\right) +\frac{2q(1-e^{-2\pi\delta'})}{1+q}+\frac{2q^3(1-e^{-6\pi\delta'})}{1+q^3} \\
&\qquad +\frac{2}{(1-q^2)(1-q^2 e^{-4\pi\delta'})}- \frac{2}{(1-q^2)^2}=C(\delta').
\end{align*} 
\end{proof}

\section{A Construction for the Case $j_E=0$}

In this section we show that if $E/K$ with $j_E=0$ is not supposed to have everywhere semistable reduction, then $h(E/K)$ can get arbitrarily close to $h_{\text{min}}$.\\

First we recall a well-known result on Eisenstein polynomials. Let $K$ be a number field, $\p\subset \ok$ a prime ideal, $f\in\ok [X]$ a monic Eisenstein polynomial for $\p$ of degree $n$ and $\alpha$ a zero of $f$. Then $f$ is irreducible in $K[X]$ and $\p$ is totally ramified in the extension $K(\alpha)/K$. If in particular $K=\Q$, $\p=(p)$ and furthermore $f(0)=p$, then $p \mco _{\Q(\alpha)}= \alpha ^n \mco _{\Q(\alpha)}$ and $N_{\Q(\alpha)/\Q}(\alpha)=(-1)^n p$.

\begin{lemma}\label{eisen}
Let $n\geq 1$ and $p$ a prime number with $p\equiv (-1)^n \modulo 9$. Then there is a monic polynomial $f$ of the form
\begin{equation}\label{9erform}
f(X)=(X-1)^n + \sum_{k=1}^n 9^k b_k (X-1)^{n-k}
\end{equation}
with $b_k \in\Z$ and $f(0)=p$ that is Eisenstein for $p$. If $\alpha$ is a zero of $f$, then $\tfrac{1}{9}(\alpha -1)$ is an algebraic integer.
\end{lemma}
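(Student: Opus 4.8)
The plan is to write $f$ down essentially explicitly. First I would record that, once $f$ has the shape \eqref{9erform}, the final assertion is automatic: setting $g(Z):=Z^{n}+\sum_{k=1}^{n}b_k Z^{n-k}\in\Z[Z]$, one has $f(X)=9^{n}\,g\!\left(\tfrac{X-1}{9}\right)$, so a zero $\alpha$ of $f$ produces a zero $\tfrac{1}{9}(\alpha-1)$ of the monic integral polynomial $g$, which is therefore an algebraic integer. Thus everything reduces to exhibiting one monic $f$ of the form \eqref{9erform} with $f(0)=p$ that is Eisenstein for $p$.

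For the construction I would start from the observation that $p\neq 3$, since $3\not\equiv\pm1\pmod 9$, so there is an integer $u$ with $9u\equiv 1\pmod p$, and then try
\begin{equation*}
f(X):=\bigl(X+9u-1\bigr)^{n}+(-1)^{n-1}\,9pr\,(X-1)^{n-1}
\end{equation*}
for an integer $r$ to be chosen. Expanding $(X+9u-1)^{n}=\bigl((X-1)+9u\bigr)^{n}$ by the binomial theorem shows that this $f$ already has the form \eqref{9erform}, with $b_k=\binom{n}{k}u^{k}$ for $k\geq 2$ and $b_1=nu+(-1)^{n-1}pr$, all of which are integers no matter what $r$ is. Moreover $f\equiv X^{n}\pmod p$: the congruence $9u-1\equiv 0\pmod p$ removes the shift in the first summand, and $p$ divides the second. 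Hence, as soon as $f(0)=p$ holds, the constant coefficient of $f$ has $p$-adic valuation exactly $1$ and Eisenstein's criterion applies.

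It then remains to choose $r$ so that $f(0)=p$. A direct computation gives $f(0)=(9u-1)^{n}+9pr$, so one needs $r=\tfrac{1}{9}\bigl(1-\tfrac{(9u-1)^{n}}{p}\bigr)$, and the point is that this is indeed an integer: $p\mid 9u-1$ makes $\tfrac{(9u-1)^{n}}{p}$ integral, and $9u-1\equiv-1\pmod 9$ gives $(9u-1)^{n}\equiv(-1)^{n}\pmod 9$, so together with the hypothesis $p\equiv(-1)^{n}\pmod 9$ one obtains $\tfrac{(9u-1)^{n}}{p}\equiv 1\pmod 9$. This integrality of $r$ — the verification that the congruence hypothesis on $p$ is precisely what reconciles the normalization $f(0)=p$ with the form \eqref{9erform} — is the only genuinely delicate point; the rest is routine binomial bookkeeping.
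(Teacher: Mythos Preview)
Your proof is correct and genuinely different from the paper's. The paper argues non-constructively: it encodes the condition $f(0)=p$ as an affine equation $\psi(b)=m$ in $\Z^n$, encodes ``all middle coefficients divisible by $p$'' as an affine system $Ab+u_2\equiv 0$ over $\F_p$, and then does a dimension count to show that the affine solution set of $\psi(b)=m$ (after reduction mod $p$) meets a coset of $\ker\tilde A$. This yields existence but no formula for $f$. For the final integrality claim the paper uses a direct non-archimedean estimate at primes over $3$.

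Your approach is more elementary and fully explicit: the single choice $9u\equiv 1\pmod p$ forces $f\equiv X^n\pmod p$ in one stroke, and you then have a one-parameter family in which $f(0)=p$ becomes a single divisibility check, cleanly discharged by the hypothesis $p\equiv(-1)^n\pmod 9$. Your substitution $f(X)=9^{n}g\bigl(\tfrac{X-1}{9}\bigr)$ for the integrality of $\tfrac{1}{9}(\alpha-1)$ is also tidier than the paper's valuation argument. The paper's method is perhaps more systematic (it would adapt to more simultaneous congruence constraints), but for the statement at hand your construction is shorter and gives an actual polynomial.
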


\begin{proof}
Let $p=(-1)^n +9m$ and $f$ a polynomial of the form \eqref{9erform}. We have to show that there is a vector $b = (b_1, \dots , b_n)^{\top}\in \Z^n$ such that $f(0)=p$ and $f$ is Eisenstein for $p$. Let $a_1, \dots , a_{n} \in \Z$ such that $f(X)=X^n +\sum_{k=1}^{n} a_k X^{n-k}$. We have 
\begin{equation*}
f(0)=a_n=\sum_{k=1}^n (-1)^{n-k} 9^{k}b_k +(-1)^n. 
\end{equation*}
Thus the condition that $f(0)=p$ can be written as
\begin{equation}\label{spi}
\psi (b) := \sum_{k=1}^n (-1)^{n-k} 9^{k-1} b_k=m.
\end{equation}
One integral solution is $u_1 = ((-1)^{n-1}m, 0, \dots ,0)^{\top}$, so the solution set of \eqref{spi} is given by $u_1 + \ker\psi$. We write $\F_p ^n$ for the $n$-dimensional vector space over the field with $p$ elements and a tilde for component-wise reduction $\modulo p$. Obviously $\ker \widetilde{\psi}$ is an $(n-1)$-dimensional subspace of $\F _p ^n$ that contains $\widetilde{\ker\psi}$. Since $\ker\psi$ is a primitive subgroup of $\Z^n$, the subspace $\widetilde{\ker\psi}$ is also $(n-1)$-dimensional. Hence we have $\ker \widetilde{\psi} = \widetilde{\ker\psi}$,
i.e.~every vector $\tilde{v}\in\ker\widetilde{\psi}$ lifts to a vector $v\in\ker\psi$. \\

Let now $a=(a_1,\dots ,a_{n-1})^{\top}\in \Z^{n-1}$. Then $a$ and $b$ are related by the system of linear equations
\begin{equation*}
 \begin{pmatrix}
   9 & 0 & \cdots & & 0 \\
   -9(n-1) & 9^2 & \cdots & & 0 \\
   \vdots  & \vdots  & \ddots & & \vdots  \\
   (-1)^{n} 9(n-1) &  & \cdots & 9^{n-1}& 0
  \end{pmatrix}
  \begin{pmatrix}
  b_1 \\
  b_2 \\
  \vdots \\
  b_n
  \end{pmatrix} 
  + \begin{pmatrix}
  -n \\
  \binom{n}{2} \\
  \vdots \\
  (-1)^{n-1}n
  \end{pmatrix} 
  = \begin{pmatrix}
  a_1 \\
  a_2 \\
  \vdots \\
  a_{n-1}
  \end{pmatrix}
\end{equation*}
which we will write as $Ab+u_2 =a$. 
$A$ is a lower triangle $(n-1) \times n$-matrix with nonzero diagonal entries and therefore has rank $(n-1)$. The coefficients $a_k$ are divisible by $p$ if
\begin{equation*}
\tilde{A}\tilde{b} + \tilde{u}_2 =\tilde{a}=0.
\end{equation*}
Since the diagonal entries of $A$ are not divisible by $p$, the reduced matrix $\tilde{A}$ also has rank $(n-1)$ and we have $\ker\tilde{A}=\{0\} ^{n-1}\times \F_p$. It follows that $\ker\widetilde{\psi}\cap \ker\tilde{A} =\{0 \}$, so $\tilde{A}$ defines a bijection between $\ker\widetilde{\psi}$ and $\F_p ^{n-1}$. Let $\tilde{u} := \tilde{A}\tilde{u}_1 + \tilde{u}_2$. Then the system 
\begin{equation*}
\tilde{A}\tilde{b'} +\tilde{u}=0
\end{equation*} 
has a unique solution $\tilde{b'}\in\ker\widetilde{\psi}$, that lifts to an integral vector $b'\in\ker\psi$. Let $b=u_1 + b'$. Then $\psi (b) =m$ and 
\begin{equation*}
\tilde{A}\tilde{b}+\tilde{u}_2 =  \tilde{A}(\tilde{u}_1+\tilde{b'})+\tilde{u}_2 = \tilde{A}\tilde{b'} + \tilde{u}=0,
\end{equation*} 
so the polynomial defined by $b$ has the desired properties.\\ 

Let eventually $\p$ be a prime in $\mco_{\Q(\alpha)}$ lying over $3$. Then
\begin{equation*}
|\alpha -1|_\p ^n \leq \maks_{1\leq k\leq n} |9^k b_k (\alpha -1)^{n-k}|_\p \leq 9^{-k_0}|\alpha -1|_\p ^{n-k_0}
\end{equation*}
for some $k_0$. Thus $|\alpha -1|_\p \leq 9^{-1}$.
\end{proof}

Now we are ready to construct the appropriate number fields.
\begin{proof}[Proof of Theorem \ref{nogap}]
For an algebraic integer $\alpha\in\ov{\Q}^*$, we consider elliptic curves with $j$-invariant $0$ given by the equations  
\begin{equation}\label{we1}
E_1: Y^2 + \alpha Y = X^3 
\end{equation}
with discriminant $\de_1 = -27\alpha ^4$ and 
\begin{equation}\label{we2}
E_2: Y^2 = X^3 + (\varrho ^2-1)X^2 -\varrho^2 X + \tfrac{i}{3\sqrt{3}}(\alpha ^2 -1),
\end{equation}
where again $\varrho=\tfrac{-1+\sqrt{3}i}{2}$, with discriminant $\de_2 = 16\alpha ^4$. Note that $j_{E_1}=j_{E_2}=0$. One can think of $E_2$ as a disturbed Legendre equation. Let $\zeta_8=e^{-i\pi/8}$, $K=\Q(\sqrt[3]{4} , \sqrt[4]{27}\zeta_8 )$ and $L=K(\alpha)$. Then we have $[K:\Q]=12$ and $i, \varrho, \sqrt{3}\in K$. The discriminant of the extension $K/\Q$ is $2^{16}\cdot 3^{15}$, so $2$ and $3$ are the only primes that ramify in $K/\Q$. The change of variables
\begin{equation*}
X = \tfrac{1}{\sqrt[3]{4}} ((1-\varrho )X' -1), \quad Y = \tfrac{1}{2}(\sqrt[4]{27}\zeta_8 Y' -\alpha)
\end{equation*}
shows that $E_1/L$ and $E_2/L$ are isomorphic. We denote the corresponding curve by $E/L$. Now we assume that $\alpha$ is coprime to $2$ and $3$. Then \eqref{we1} is a minimal Weierstra\ss{} equation at every prime in $\mco _L$ lying over $2$. If $\tfrac{1}{3\sqrt{3}}(\alpha^2 -1)$ is an algebraic integer, then \eqref{we2} is integral and therefore minimal at every prime in $\mco_L $ dividing $3$. In this case, $E/L$ has bad reduction only at primes dividing $\alpha$.\\

Our task is to choose a suitable $\alpha$. Let $n\geq 1$ and $p$ a prime number with $p\equiv (-1)^n \modulo 9$. By Lemma \ref{eisen} there is a monic polynomial $f\in \Z[X]$ of degree $n$ that is Eisenstein for $p$ and of the form \eqref{9erform} with $f(0)=p$. We choose $\alpha$ to be a zero of $f$, so that $\tfrac{1}{9}(\alpha -1)$ and hence also $\tfrac{1}{3\sqrt{3}}(\alpha^2 -1)$ is an algebraic integer. Moreover, $p$ factors as $p\mco _{\Q(\alpha)}=\alpha^n \mco_{\Q(\alpha)}$. We have $p\geq 5$, so $p$ does not ramify in $K/\Q$. Let $p\mco _K=\p _1 \cdots \p_m$ be the prime factorization of $p\mco_K$.
Regarded as a polynomial in $\mco_K [X]$, $f$ is monic and Eisenstein for every $\p_j$ and therefore irreducible in $K[X]$. Thus $[L:K]=n$ and every $\p_j$ is totally ramified in $L/K$, say $\p_j \mco _L =\mathfrak{P}_j ^n$. Now we have 
\begin{equation*}
\alpha^n \mco_L= p\mco_L =  \p _1 \cdots \p_m \mco_L= \mathfrak{P} _1 ^n\cdots \mathfrak{P}_m ^n.
\end{equation*}
It follows that $\alpha\mco_L = \mathfrak{P} _1 \cdots \mathfrak{P}_m $, so $\alpha\mco_L$ is unramified in $L/\Q(\alpha)$. If $\qu\subset\mco _L$ is a prime ideal dividing $\alpha\mco_L$, then
\begin{equation*}
\order_\qu \de_1 =\order_\qu \de_2 = \order_\qu (\alpha^4\mco_L) = 4<12. 
\end{equation*}
It follows that \eqref{we1} and \eqref{we2} are minimal Weierstra\ss{} equations for $E/L$ at $\qu$. Hence 
\begin{equation*}
N_{L/\Q}(\de_{E/L})= N_{L/\Q}(\alpha ^4 ) = N_{\Q(\alpha)/\Q}(\alpha )^{4[L:\Q(\alpha)]}= p^{4[L:\Q(\alpha)]}. 
\end{equation*}
and 
\begin{equation*}
h_0 (E/L) = \frac{1}{12[L:\Q]}\log N_{L/\Q}(\de_{E/L})= \frac{4[L:\Q(\alpha)]}{12[L:\Q]}\log p = \frac{\log p }{3n}.
\end{equation*}  
We can keep $p$ bounded as $n$ grows, for example we may choose $p=17$ for $n$ odd and $p=19$ for $n$ even. Then $h_0 (E/L)$ gets arbitrarily close to $0$. Since $h_{\infty}(E/L)+\frac{1}{2}\log\pi =h_{\text{min}}$ for all elliptic curves $E/L$ with $j$-invariant $0$, this implies Theorem \ref{nogap}.
\end{proof}

\end{document}